\title{Quasi-Stationary Asymptotics for Perturbed Semi-Markov Processes in Discrete Time}
\author{Mikael Petersson\footnote{Department of Mathematics, Stockholm University, SE-106 91 Stockholm, Sweden, mikpe@math.su.se.}}
\date{}
\newcommand{\eps}{{(\varepsilon)}}
\newcommand{\zero}{{(0)}}
\newcommand{\cond}{\, | \,}
\newcommand{\jA}{{_{j}\mathbf{A}}}
\newcommand{\iP}{{_{i}\mathbf{P}}}
\newcommand{\jP}{{_{j}\mathbf{P}}}
\newcommand{\jU}{{_{j}\mathbf{U}}}
\newcommand{\ja}{{_{j}a}}
\newcommand{\ig}{{_{i}g}}
\newcommand{\jg}{{_{j}g}}
\newcommand{\kg}{{_{k}g}}
\newcommand{\jp}{{_{j}p}}
\newcommand{\iphi}{{_{i}\phi}}
\newcommand{\jphi}{{_{j}\phi}}
\newcommand{\kphi}{{_{k}\phi}}
\newcommand{\Ccontinuity}{\mathbf{A}}
\newcommand{\Cergodicity}{\mathbf{B}}
\newcommand{\Cmgf}{\mathbf{C}}
\newcommand{\Cperturbation}{\mathbf{D}}
\newcommand{\Cnonperiodicity}{\mathbf{E}}
\newcommand{\Ccontinuityreg}{\mathbf{A^*}}
\newcommand{\Cmgfreg}{\mathbf{B^*}}
\newcommand{\Cperturbationreg}{\mathbf{C^*}}
\newcommand{\Ctailprobreg}{\mathbf{D^*}}
\newcommand{\Cperturbationmom}{\mathbf{D'}}
\newtheorem{theorem}{Theorem}
\newtheorem{lemma}{Lemma}
\numberwithin{equation}{section}
\numberwithin{theorem}{section}
\numberwithin{lemma}{section}
\begin{document}

\maketitle

\begin{abstract}
We consider a discrete time semi-Markov process where the characteristics defining the process depend on a small perturbation parameter. It is assumed that the state space consists of one finite communicating class of states and, in addition, one absorbing state. Our main object of interest is the asymptotic behaviour of the joint probabilities of the position of the semi-Markov process and the event of non-absorption as time tends to infinity and the perturbation parameter tends to zero. The main result gives exponential expansions of these probabilities together with an recursive algorithm for computing the coefficients in the expansions.
\end{abstract}

\noindent \textbf{Keywords:} Semi-Markov process, Perturbation, Asymptotic Expansion, Regenerative process, Renewal equation, Solidarity property, First hitting time. \\
\\
\textbf{MSC2010:} Primary 60K15; Secondary 41A60, 60K05.

\section{Introduction}

The aim of this paper is to present a detailed asymptotic analysis of the long time behaviour of non-linearly perturbed discrete time semi-Markov processes with absorption.

We consider a discrete time semi-Markov process $\xi^\eps(n)$, on a finite state space, depending on a small perturbation parameter $\varepsilon \geq 0$ in the sense that its transition probabilities are functions of $\varepsilon$. It is assumed that these functions are continuous at $\varepsilon = 0$ so that the process $\xi^\eps(n)$ for $\varepsilon > 0$ can be interpreted as a perturbation of the process $\xi^\zero(n)$. Furthermore, we assume that for $\varepsilon$ small enough, the state space can be partitioned into one communicating class of states $\{ 1,\ldots,N \}$ and one absorbing state $0$. The absorption time, that is, the first hitting time of state $0$ for the semi-Markov process $\xi^\eps(n)$, is denoted by $\mu_0^\eps$.

Our main object of interest is the asymptotic behaviour of the probabilities
\begin{equation*}
 P_{i j}^\eps(n) = \mathsf{P}_i \{ \xi^\eps(n) = j, \ \mu_0^\eps > n \}, \ i,j \neq 0,
\end{equation*}
as $n \rightarrow \infty$ and $\varepsilon \rightarrow 0$.

It turns out that the forms of the asymptotic results depend on if one-step absorption probabilities vanish asymptotically or if some of them are non-zero in the limit. In the former case the absorption time $\mu_0^\eps \rightarrow \infty$ in probability as $\varepsilon \rightarrow 0$ and we get so-called \emph{pseudo-stationary} asymptotics for the probabilities $P_{i j}^\eps(n)$. In the latter case, $\mu_0^\eps$ are stochastically bounded as $\varepsilon \rightarrow 0$ and we get so-called \emph{quasi-stationary} asymptotics for the probabilities $P_{i j}^\eps(n)$. In the present paper we give a unified treatment of both cases.

Our perturbation conditions are formulated in terms of the following mixed power-exponential moments for transition probabilities:
\begin{equation} \label{eq:momentsintro}
 p_{i j}^\eps(\rho^\zero,r) = \sum_{n=0}^\infty n^r e^{\rho^\zero n} Q_{i j}^\eps(n), \ r=0,1,\ldots, \ i,j \neq 0,
\end{equation}
where $Q_{i j}^\eps(n)$ are the transition probabilities for the semi-Markov process and $\rho^\zero$ is a non-negative constant determined by the distribution of first return time to the initial state for the limiting semi-Markov process. In the pseudo-stationary case $\rho^\zero = 0$ and then the moments in \eqref{eq:momentsintro} reduce to usual power moments.

We allow for smooth non-linear perturbations which means that the moments in \eqref{eq:momentsintro} may be non-linear function of $\varepsilon$ which for $r = 0,\ldots,k$ can be expanded in an asymptotic power series with respect to $\varepsilon$.

As it turns out, the asymptotics of the probabilities $P_{i j}^\eps(n)$ depends on the balance between the rate at which the time $n \rightarrow \infty$ and the perturbation $\varepsilon \rightarrow 0$. If we write $n = n^\eps$ as a function of $\varepsilon$, this balance is characterized by the following relation:
\begin{equation} \label{eq:balanceintro}
 \varepsilon^r n^\eps \rightarrow \lambda_r \in [0,\infty), \ \text{for some} \ 1 \leq r \leq k.
\end{equation}

Under assumptions mentioned above and some additional Cram{\'e}r type conditions on moments of transition times and a non-periodicity condition for the limiting semi-Markov process we obtain the following which is our main result: For any $n^\eps \rightarrow \infty$ as $\varepsilon \rightarrow 0$ in such a way that relation \eqref{eq:balanceintro} holds, we have
\begin{equation} \label{eq:mainintro}
 \frac{ \mathsf{P}_i \{ \xi^\eps(n^\eps) = j, \ \mu_0^\eps > n^\eps \} }{\exp( -(\rho^\zero + c_1 \varepsilon + \cdots + c_{r-1} \varepsilon^{r-1}) n^\eps )} \rightarrow \frac{\widetilde{\pi}_{i j}^\zero}{e^{\lambda_r c_r}} \ \text{as} \ \varepsilon \rightarrow 0, \ i,j \neq 0.
\end{equation}
Relation \eqref{eq:mainintro} is supplemented with (i) an explicit expression for the constant $\widetilde{\pi}_{i j}^\zero$, (ii) an equation from which $\rho^\zero$ can be found at least numerically, and (iii) a recursive algorithm for computing the coefficients $c_1,\ldots,c_r$ as rational functions of coefficients in expansions of the moments in Equation \eqref{eq:momentsintro}.

In the pseudo-stationary case, the asymptotic relation \eqref{eq:mainintro} takes a simpler form. In this case, $\rho^\zero = 0$ and the constants $\widetilde{\pi}_{i j}^\zero$ do not depend on the initial state $i$ and are given by the stationary probabilities of the limiting semi-Markov process.

In order to prove \eqref{eq:mainintro} we use the theory of perturbed discrete time renewal equations developed in Gyllenberg and Silvestrov (1994), Englund and Silvestrov (1997), and Silvestrov and Petersson (2013). However, the results can not be applied directly. This is because conditions for semi-Markov processes are naturally formulated in terms of its transition probabilities while the application of the renewal theory requires conditions for some non-local characteristics of the semi-Markov process to hold. To prove that the conditions we formulate for semi-Markov processes are sufficient for the conditions required for the results from renewal theory we use techniques from Gyllenberg and Silvestrov (2008). In particular, we need to calculate the coefficients in expansions of mixed power-exponential moments for first return times based on the coefficients in the expansions of the moments in Equation \eqref{eq:momentsintro}. This analysis makes up a substantial part of the proof of the main result and may also have applications beyond the scope of this paper.

The asymptotic relation \eqref{eq:mainintro} is proved for continuous time semi-Markov processes in Gyllenberg and Silvestrov (1999, 2008). In Gyllenberg and Silvestrov (2008) the result is also extended to the case of initial 
transient states.

Expansions of the type given in Equation \eqref{eq:mainintro} and similar types of exponential expansions have also been given for ruin probabilities in perturbed risk models, see for example Gyllenberg and Silvestrov (2000, 2008), Englund (2001), Blanchet and Zwart (2010), Ni (2011, 2014), and Petersson (2014).

In the pseudo-stationary case, many of the existing results in the literature are concerned with an asymptotic analysis of absorption times or other types of first hitting times in various types of Markov and semi-Markov processes, see for example Keilson (1966), Latouche and Louchard (1978), Latouche (1991), Avrachenkov and Haviv (2004), Drozdenko (2007), and Jung (2013).

In the quasi-stationary case, almost all papers in the literature deals with models without perturbations. In particular, a great deal of attention has been given the study of so-called quasi-stationary distributions, see for example Darroch and Seneta (1965), Seneta and Vere-Jones (1966), Cheong (1970), Flaspohler and Holmes (1972), Collet, Mart{\'i}nez, and San Mart{\'i}n (2013), and van Doorn and Pollett (2013). For models with perturbations, asymptotic expansions of quasi-stationary distributions are given in Gyllenberg and Silvestrov (2008) for continuous time regenerative processes and semi-Markov processes, and in Petersson (2013) for discrete time regenerative processes.

One of the most extensively studied models of perturbed stochastic processes is the model of linearly perturbed Markov chains. In particular, asymptotic expansions of stationary distributions have been given for so-called nearly uncoupled Markov chains. For some results and more references related to this line of research we refer to Simon and Ando (1961), Schweitzer (1968), Stewart (1991), Hassin and Haviv (1992), Yin and Zhang (1998, 2003), Altman, Avrachenkov and N{\'u}{\~n}ez-Queija (2004), and Avrachenkov, Filar, and Howlett (2013).

For more references related to pseudo-stationary and quasi-stationary asymptotics we refer to the extensive bibliography given in Gyllenberg and Silvestrov (2008).

Let us finally briefly outline the structure of the paper. Section \ref{sec:regenerative} presents exponential expansions for perturbed discrete time regenerative processes. In Section \ref{sec:semimarkov} we present in detail the model of perturbed discrete time semi-Markov processes and introduce some notation that will be used throughout the paper. Section \ref{sec:moments} derives systems of linear equations for moments of first hitting times and gives a necessary and sufficient condition for these moments to be finite. In Section \ref{sec:solidarity} we prove some solidarity properties for moments of first hitting times which are essential for our main result. Section \ref{sec:powerseries} constructs asymptotic power series expansions for moments of first hitting times. In Section \ref{sec:periodicity} we prove a solidarity property of periodicity which is needed in order to apply the renewal theory. Finally, Section \ref{sec:mainresult} presents the main asymptotic result.

\section{Exponential Expansions for Perturbed \\ Regenerative Processes} \label{sec:regenerative}

This section presents asymptotic exponential expansions for perturbed discrete time regenerative processes. The results in this section are obtained by applying a corresponding result for discrete time renewal equations given in Silvestrov and Petersson (2013).

For every $\varepsilon \geq 0$, let $Z_n^\eps$, $n = 0,1,\ldots,$ be a regenerative process on a measurable state space $(\mathcal{X}, \Gamma)$ with proper regeneration times $0 = \tau_0^\eps < \tau_1^\eps < \cdots$. Furthermore, let $\mu^\eps$ be a random variable, defined on the same probability space, that takes values in the set $\{ 0,1,\ldots,\infty \}$. Assume that for each $A \in \Gamma$, the probabilities $P^\eps(n,A) = \mathsf{P} \{ Z_n^\eps \in A, \ \mu^\eps > n \}$ satisfy the renewal equation
\begin{equation*}
 P^\eps(n,A) = q^\eps(n,A) + \sum_{k=0}^n P^\eps(n-k,A) f^\eps(k), \ n = 0,1,\ldots,
\end{equation*}
where
\begin{equation*}
 q^\eps(n,A) = \mathsf{P} \{ Z_n^\eps \in A, \ \mu^\eps \wedge \tau_1^\eps > n \}
\end{equation*}
and
\begin{equation*}
 f^\eps(k) = \mathsf{P} \{ \tau_1^\eps = k, \ \mu^\eps > \tau_1^\eps \}.
\end{equation*}
Then, we call $\mu^\eps$ a regenerative stopping time.

Notice that $f^\eps(n)$ are possibly improper distributions with defect
\begin{equation*}
 f^\eps = 1 - \sum_{n=0}^\infty f^\eps(n) = \mathsf{P} \{ \mu^\eps \leq \tau_1^\eps \},
\end{equation*}
that is, the defect is given by the stopping probability in one regeneration period.

Moment generating functions for first regeneration times are defined by
\begin{equation*}
 \phi^\eps(\rho) = \sum_{n=0}^\infty e^{\rho n} f^\eps(n), \ \rho \in \mathbb{R}.
\end{equation*}

We will assume that the distributions of first regeneration times satisfy the following conditions:
\begin{enumerate}
 \item[$\Ccontinuityreg$:]
 \begin{enumerate}
  \item[\textbf{(a)}] $f^\eps(n) \rightarrow f^\zero(n)$ as $\varepsilon \rightarrow 0$, for all $n = 0,1,\ldots,$ where the limiting distribution $f^\zero(n)$ is non-periodic and not concentrated at zero.
  \item[\textbf{(b)}] $f^\eps \rightarrow f^\zero \in [0,1)$ as $\varepsilon \rightarrow 0$.
 \end{enumerate}
\end{enumerate}
\begin{enumerate}
 \item[$\Cmgfreg$:] There exists $\delta > 0$ such that:
 \begin{enumerate}
  \item[\textbf{(a)}] $\limsup_{0 \leq \varepsilon \rightarrow 0} \phi^\eps(\delta) < \infty$.
  \item[\textbf{(b)}] $\phi^\zero(\delta) > 1$.
 \end{enumerate}
\end{enumerate}

The solution of the following characteristic equation plays a crucial role in what follows:
\begin{equation} \label{eq:chareqreg}
 \phi^\eps(\rho) = 1.
\end{equation}

Our first lemma gives some basic properties for the solution of Equation \eqref{eq:chareqreg}. The proof can be found in Silvestrov and Petersson (2013).

\begin{lemma} \label{lmm:chareqreg}
If conditions $\Ccontinuityreg$ and $\Cmgfreg$ hold, then there exists a unique non-negative solution $\rho^\eps$ of the characteristic equation \eqref{eq:chareqreg} for sufficiently small $\varepsilon$. Moreover, we have $\rho^\eps \rightarrow \rho^\zero < \delta$ as $\varepsilon \rightarrow 0$.
\end{lemma}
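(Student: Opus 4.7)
The plan is to analyze the map $\rho \mapsto \phi^\eps(\rho)$ on $[0,\delta]$ and apply an intermediate value argument, then obtain the limit $\rho^\eps \to \rho^\zero$ by a two-sided squeeze. First I would collect the basic analytic properties: $\phi^\eps(\rho) = \sum_n e^{\rho n} f^\eps(n)$ is a non-negative power series in $e^\rho$, hence convex, continuous, and non-decreasing in $\rho$ wherever finite. Condition $\Cmgfreg$(a) makes $\phi^\eps(\delta)$ finite and uniformly bounded in $\varepsilon$ for all small enough $\varepsilon$, so $\phi^\eps$ is well-defined and continuous on $[0,\delta]$ for such $\varepsilon$. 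Strict monotonicity on $[0,\delta]$ follows because $\Ccontinuityreg$(a) provides some $n_0 \geq 1$ with $f^\zero(n_0) > 0$; by pointwise convergence $f^\eps(n_0) > 0$ for small $\varepsilon$, which forces $\phi^\eps$ to have positive derivative on $[0,\delta]$.

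Next I would handle the endpoints. At $\rho = 0$, $\phi^\eps(0) = \sum_n f^\eps(n) = 1 - f^\eps \leq 1$. At $\rho = \delta$, Fatou's lemma gives $\liminf_{\varepsilon \to 0} \phi^\eps(\delta) \geq \phi^\zero(\delta) > 1$ by $\Cmgfreg$(b), so $\phi^\eps(\delta) > 1$ for all sufficiently small $\varepsilon$. Combined with continuity and strict monotonicity, the intermediate value theorem produces a unique $\rho^\eps \in [0,\delta)$ with $\phi^\eps(\rho^\eps) = 1$. Applying the same argument at $\varepsilon = 0$ yields $\rho^\zero \in [0,\delta)$.

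For the convergence $\rho^\eps \to \rho^\zero$, I would establish matching upper and lower bounds. For any $\rho^* \in (\rho^\zero, \delta)$, truncate the series: $\sum_{n \leq N} e^{\rho^* n} f^\eps(n) \to \sum_{n \leq N} e^{\rho^* n} f^\zero(n)$ as $\varepsilon \to 0$ by $\Ccontinuityreg$(a), and the latter tends to $\phi^\zero(\rho^*) > 1$ as $N \to \infty$. Hence $\phi^\eps(\rho^*) > 1$ for small $\varepsilon$ and, by strict monotonicity, $\rho^\eps < \rho^*$. For $\rho^{**} \in (0,\rho^\zero)$ (the case $\rho^\zero = 0$ being immediate), pick an intermediate $\rho' \in (\rho^{**},\delta)$ and split $\phi^\eps(\rho^{**})$ at a cutoff $N$; on the tail the bound $e^{\rho^{**} n} \leq e^{-(\rho'-\rho^{**})N} e^{\rho' n}$ gives a remainder at most $e^{-(\rho'-\rho^{**})N} \phi^\eps(\rho')$, which is uniformly small in $\varepsilon$ by $\Cmgfreg$(a) and monotonicity of $\phi^\eps$, while the head converges to a quantity bounded above by $\phi^\zero(\rho^{**}) < 1$. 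Therefore $\phi^\eps(\rho^{**}) < 1$ eventually, so $\rho^\eps > \rho^{**}$.

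The main obstacle I expect is the uniform tail control in the lower bound: a priori $f^\eps$ could place asymptotically heavier mass on large $n$, and one must rule this out to pass from pointwise convergence to the necessary exponential-moment convergence on $[0,\rho^\zero)$. This is precisely where the truncation combined with the uniform bound $\limsup \phi^\eps(\delta) < \infty$ from $\Cmgfreg$(a) does the work, providing the Cram\'er-type domination that transfers convergence of coefficients to convergence of moment generating functions in a neighbourhood of $\rho^\zero$.
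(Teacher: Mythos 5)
Your argument is correct: monotonicity and continuity of $\phi^\eps$ on $[0,\delta]$, the endpoint comparison $\phi^\eps(0)\leq 1<\phi^\eps(\delta)$ via Fatou, and the two-sided squeeze using truncation plus the uniform bound $\limsup_{0\leq\varepsilon\to 0}\phi^\eps(\delta)<\infty$ is exactly the standard route; the paper itself omits the proof and defers to Silvestrov and Petersson (2013), where the same argument is used. No gaps.
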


The root $\rho^\eps$ of the characteristic equation is only given as the solution of a non-linear equation. In order to give a more detailed description of the asymptotic behaviour of $\rho^\eps$ as $\varepsilon \rightarrow 0$ we can construct an asymptotic expansion. This requires some perturbation conditions on the following mixed power-exponential moment generating functions:
\begin{equation*}
 \phi^\eps(\rho,r) = \sum_{n=0}^\infty n^r e^{\rho n} f^\eps(n), \ \rho \in \mathbb{R}, \ r = 0,1,\ldots
\end{equation*}
Note that $\phi^\eps(\rho,0) = \phi^\eps(\rho)$.

It follows from condition $\Cmgfreg$ that there exist $\delta > 0$ and $\varepsilon_0 > 0$ such that $\phi^\eps(\delta) < \infty$ for all $\varepsilon \leq \varepsilon_0$. Using this, we get for all $\rho < \delta$, $r=0,1,\ldots,$ and $\varepsilon \leq \varepsilon_0$ that
\begin{equation*}
 \phi^\eps(\rho,r) \leq \left( \sup_{n \geq 0} n^r e^{-(\delta-\rho) n} \right) \phi^\eps(\delta) < \infty.
\end{equation*}

Let us now introduce our perturbation condition:
\begin{enumerate}
 \item[$\Cperturbationreg$:] $\phi^\eps(\rho^\zero,r) = \phi^\zero(\rho^\zero,r) + a_{1,r} \varepsilon + \cdots + a_{k-r,r} \varepsilon^{k-r} + o(\varepsilon^{k-r})$, for $r = 0,\ldots,k$, where $|a_{n,r}| < \infty$, $n = 1,\ldots,k-r$, $r = 0,\ldots,k$.
\end{enumerate}
For convenience we denote $a_{0,r} = \phi^\zero(\rho^\zero,r)$, for $r = 0,\ldots,k$.

In order to apply the theory of perturbed renewal equations, we also need the following condition:
\begin{enumerate}
 \item[$\Ctailprobreg$:] There exists $\gamma > 0$ such that
 \begin{equation*}
  \limsup_{0 \leq \varepsilon \rightarrow 0} \sum_{n=0}^\infty e^{(\rho^\zero + \gamma) n} q^\eps(n,\mathcal{X}) < \infty.
 \end{equation*}
\end{enumerate}

Furthermore, we define
\begin{equation*}
 \Gamma_0 = \{ A \in \Gamma : q^\eps(n,A) \rightarrow q^\zero(n,A) \ \text{as} \ \varepsilon \rightarrow 0, \ n=0,1,\ldots \}
\end{equation*}
and
\begin{equation*}
 \widetilde{\pi}^\zero(A) = \frac{\sum_{n=0}^\infty e^{\rho^\zero n} q^\zero(n,A)}{\sum_{n=0}^\infty n e^{\rho^\zero n} f^\zero(n)}.
\end{equation*}

Our first theorem shows how we can construct an asymptotic expansion for the root of the characteristic equation based on the coefficients given in condition $\Cperturbationreg$ and how this yields asymptotic exponential expansions for the probabilities $P^\eps(n,A)$, $A \in \Gamma_0$. This result is proved in Silvestrov, Petersson (2013) for a general renewal equation under slightly different conditions. In the following proof we show that the conditions in the present paper are sufficient in order to apply this result to prove Theorem \ref{thm:expexpreg}.

\begin{theorem} \label{thm:expexpreg}
Assume that conditions $\Ccontinuityreg$, $\Cmgfreg$, and $\Cperturbationreg$ hold.
\begin{enumerate}
\item[$\mathbf{(i)}$] Then, the root $\rho^\eps$ of the characteristic equation \eqref{eq:chareqreg} has the asymptotic expansion
\begin{equation*}
 \rho^\eps = \rho^\zero + c_1 \varepsilon + \cdots + c_k \varepsilon^k + o(\varepsilon^k),
\end{equation*}
where $c_1 = - a_{1,0} / a_{0,1}$ and for $n = 2,\ldots,k$,
\begin{equation*}
\begin{split}
 c_n = - \frac{1}{a_{0,1}} &\Bigg( a_{n,0} + \sum_{q=1}^{n-1} a_{n-q,1} c_q \\
 &+ \sum_{m=2}^n \sum_{q=m}^n a_{n-q,m} \cdot \sum_{n_1,\ldots,n_{q-1} \in D_{m,q}} \prod_{p=1}^{q-1} \frac{c_p^{n_p}}{n_p!} \Bigg),
\end{split}
\end{equation*}
with $D_{m,q}$ being the set of all non-negative integer solutions to the system
\begin{equation*}
 n_1 + \cdots + n_{q-1} = m, \quad n_1 + \cdots + (q-1) n_{q-1} = q.
\end{equation*}
\item[$\mathbf{(ii)}$] If, in addition, condition $\Ctailprobreg$ holds, then for any non-negative integer valued function $n^\eps \rightarrow \infty$ as $\varepsilon \rightarrow 0$ in such a way that $\varepsilon^r n^\eps \rightarrow \lambda_r \in [0,\infty)$ for some $1 \leq r \leq k$, we have
\begin{equation*}
 \frac{P^\eps(n^\eps,A)}{\exp( -(\rho^\zero + c_1 \varepsilon + \cdots + c_{r-1} \varepsilon^{r-1}) n^\eps )} \rightarrow \frac{\widetilde{\pi}^\zero(A)}{e^{\lambda_r c_r}} \ \text{as} \ \varepsilon \rightarrow 0, \ A \in \Gamma_0.
\end{equation*}
\end{enumerate}
\end{theorem}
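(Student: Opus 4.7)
The plan is to split the argument into part (i), a formal power-series calculation, and part (ii), an application of the cited discrete-time perturbed renewal theorem of Silvestrov and Petersson (2013) after a Cram\'er-type exponential tilt. The paper itself notes that the substantive work is to verify that the hypotheses $\Ccontinuityreg$, $\Cmgfreg$, $\Cperturbationreg$, and $\Ctailprobreg$ of this theorem are sufficient to invoke that more general result.

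For part (i), the starting identity is the convergent Taylor expansion
\begin{equation*}
1 = \phi^\eps(\rho^\eps) = \sum_{m=0}^\infty \frac{(\rho^\eps - \rho^\zero)^m}{m!}\,\phi^\eps(\rho^\zero,m),
\end{equation*}
valid because $\rho^\eps < \delta$ for small $\varepsilon$ by Lemma \ref{lmm:chareqreg} and $\phi^\eps(\rho^\zero,m) \le \sup_n n^m e^{-(\delta-\rho^\zero)n}\phi^\eps(\delta)$ is uniformly bounded under $\Cmgfreg$. I would insert the ansatz $\rho^\eps - \rho^\zero = c_1\varepsilon + \cdots + c_k\varepsilon^k + o(\varepsilon^k)$, substitute the expansions of $\phi^\eps(\rho^\zero,m)$ from $\Cperturbationreg$, cancel the $\varepsilon^0$ contribution via $a_{0,0} = \phi^\zero(\rho^\zero) = 1$, and equate coefficients of $\varepsilon^n$ for $n = 1,\ldots,k$. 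The multinomial theorem shows that the coefficient of $\varepsilon^q$ in $(\rho^\eps-\rho^\zero)^m/m!$ is $\sum_{(n_1,\ldots,n_{q-1}) \in D_{m,q}} \prod_p c_p^{n_p}/n_p!$, which is precisely where the index set $D_{m,q}$ enters. Isolating the unique $c_n$-bearing term (the $m=1$, $q=n$ slot, whose coefficient is $a_{0,1} > 0$) and solving produces the stated recursion, and a short remainder estimate pairing each factor's $o(\varepsilon^{k-m})$ tail against the $O(\varepsilon^m)$ size of $(\rho^\eps-\rho^\zero)^m$ certifies the claimed $o(\varepsilon^k)$ accuracy.

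For part (ii), I would exponentially tilt by $\rho^\eps$: set $\widetilde{P}^\eps(n,A) = e^{\rho^\eps n}P^\eps(n,A)$, $\widetilde{q}^\eps(n,A) = e^{\rho^\eps n}q^\eps(n,A)$, and $\widetilde{f}^\eps(k) = e^{\rho^\eps k}f^\eps(k)$, producing the proper perturbed renewal equation
\begin{equation*}
\widetilde{P}^\eps(n,A) = \widetilde{q}^\eps(n,A) + \sum_{k=0}^n \widetilde{P}^\eps(n-k,A)\,\widetilde{f}^\eps(k),
\end{equation*}
with $\sum_k \widetilde{f}^\eps(k) = \phi^\eps(\rho^\eps) = 1$. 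Applying the Silvestrov-Petersson (2013) theorem to this (now proper) equation yields $\widetilde{P}^\eps(n^\eps,A) \to \widetilde{\pi}^\zero(A)$, whose numerator $\sum e^{\rho^\zero n}q^\zero(n,A)$ is the total mass of the limiting tilted forcing and whose denominator $\sum n e^{\rho^\zero n}f^\zero(n) = \phi^\zero(\rho^\zero,1)$ is the first moment of the limiting tilted distribution. Undoing the tilt through $P^\eps(n^\eps,A) = e^{-\rho^\eps n^\eps}\widetilde{P}^\eps(n^\eps,A)$ and using the expansion from (i) to write
\begin{equation*}
\rho^\eps n^\eps = \bigl(\rho^\zero + c_1\varepsilon + \cdots + c_{r-1}\varepsilon^{r-1}\bigr) n^\eps + c_r(\varepsilon^r n^\eps) + \sum_{j=r+1}^k c_j \varepsilon^{j-r}(\varepsilon^r n^\eps) + o(1),
\end{equation*}
together with $\varepsilon^r n^\eps \to \lambda_r$ and $\varepsilon^{j-r} \to 0$ for $j>r$, extracts the factor $e^{-\lambda_r c_r}$ in the claimed ratio.

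The principal obstacle, explicitly flagged by the author, is the bookkeeping required to verify that the hypotheses of this paper imply those of the general renewal theorem. The main points to check are: the tilted distribution $\widetilde{f}^\eps$ must admit a $\Cperturbationreg$-type expansion of its moments at zero (immediate, because the relevant reference tilt is by the fixed $\rho^\zero$ rather than $\rho^\eps$, and power moments of this tilted distribution coincide with $\phi^\eps(\rho^\zero,r)$); the tilted forcing $\widetilde{q}^\eps$ must satisfy a uniform summability with a positive exponential margin, which is exactly the content of $\Ctailprobreg$ once one absorbs the $\rho^\eps - \rho^\zero = o(1)$ discrepancy into the margin $\gamma$; and $\widetilde{f}^\zero$ must be non-periodic, which is automatic since tilting preserves support. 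Once these translations are in place, the abstract theorem delivers both (i) and (ii).
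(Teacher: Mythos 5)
Your outline takes a genuinely different route from the paper. The paper does not re-derive the expansion of $\rho^\eps$ or redo the renewal-theoretic limit at all: both parts are quoted from Silvestrov and Petersson (2013), and the entire content of the proof is the verification that conditions $\Ccontinuityreg$--$\Ctailprobreg$ imply the three hypotheses $\boldsymbol{(\alpha)}$, $\boldsymbol{(\beta)}$, $\boldsymbol{(\gamma)}$ of that cited result. You instead sketch a proof of the cited result itself: a Taylor/ansatz computation for part $\mathbf{(i)}$ and an exponential tilt plus perturbed renewal theorem for part $\mathbf{(ii)}$. The calculation identifying the $c_n$ via the multinomial coefficient of $(\rho^\eps-\rho^\zero)^m/m!$ and the set $D_{m,q}$ is the right mechanism, and your decomposition of $\rho^\eps n^\eps$ to extract $e^{-\lambda_r c_r}$ matches the paper's intent exactly. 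Two caveats if you want your route to stand on its own: substituting the ansatz and equating coefficients determines the $c_n$ but does not by itself prove that the expansion of $\rho^\eps$ \emph{exists}; one needs the inductive remainder argument (show $(\rho^\eps-\rho^\zero-c_1\varepsilon-\cdots-c_n\varepsilon^n)/\varepsilon^n\to 0$ step by step), which is precisely what the cited theorem supplies. And the tilted renewal equation is proper only after the tilt by the $\varepsilon$-dependent root $\rho^\eps$, so the ``perturbed renewal theorem'' you invoke for the double limit $n^\eps\to\infty$, $\varepsilon\to 0$ is itself the nontrivial ingredient, not the classical key renewal theorem.

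The one concrete gap relative to the paper's argument is the hypothesis that restricts the conclusion to $A\in\Gamma_0$. To get $\widetilde{P}^\eps(n^\eps,A)\to\widetilde{\pi}^\zero(A)$ one must show that the total tilted forcing mass converges, i.e.\ $\sum_{n} e^{\rho^\eps n} q^\eps(n,A)\to\sum_{n} e^{\rho^\zero n} q^\zero(n,A)$ (statement $\boldsymbol{(\beta)}$ in the paper). This is a genuine interchange of limits: the tilt parameter $\rho^\eps$ and the forcing $q^\eps(n,A)$ both move with $\varepsilon$, and the convergence is obtained by combining the pointwise convergence $q^\eps(n,A)\to q^\zero(n,A)$ (the defining property of $\Gamma_0$) with a uniform tail bound \eqref{eq:taillimit} that uses $\rho^\eps\le\rho^\zero+\beta$ for small $\varepsilon$ (Lemma \ref{lmm:chareqreg}) and the $\gamma$-margin in $\Ctailprobreg$. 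Your checklist covers the uniform summability of $\widetilde{q}^\eps$ and the expansion of the tilted moments, but never addresses this convergence of the numerator of $\widetilde{\pi}^\zero(A)$; without it the limit constant is not identified and the role of $\Gamma_0$ in the statement is unexplained. Adding that dominated-convergence step would close the argument.
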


\begin{proof}
It follows directly from a result given in Silvestrov and Petersson (2013) that part $\mathbf{(i)}$ holds. Furthermore, it also follows from this result that part $\mathbf{(ii)}$ holds for any $A \in \Gamma$ satisfying the following statements:
\begin{itemize}
\item[$\boldsymbol{(\alpha)}$] $\limsup_{0 \leq \varepsilon \rightarrow 0} |q^\eps(n,A)| < \infty$, for all $n=0,1,\ldots$
\item[$\boldsymbol{(\beta)}$] $\sum_{n=0}^\infty e^{\rho^\eps n} q^\eps(n,A) \rightarrow \sum_{n=0}^\infty e^{\rho^\zero n} q^\zero(n,A)$, as $\varepsilon \rightarrow 0$.
\item[$\boldsymbol{(\gamma)}$] $\limsup_{0 \leq \varepsilon \rightarrow 0} \sum_{n=0}^\infty e^{(\rho^\zero + \gamma)n} |q^\eps(n,A)| < \infty$, for some $\gamma > 0$.
\end{itemize}

Since we always have $0 \leq q^\eps(n,A) \leq 1$, it follows that statement $\boldsymbol{(\alpha)}$ holds for any $A \in \Gamma$. Also $\boldsymbol{(\gamma)}$ holds for any $A \in \Gamma$. This follows from condition $\Ctailprobreg$ since $0 \leq q^\eps(n,A) \leq q^\eps(n,\mathcal{X})$.

Let us finally show that $\boldsymbol{(\beta)}$ holds for any $A \in \Gamma_0$.

It follows from Lemma \ref{lmm:chareqreg} that for every $\beta > 0$, we have $\rho^\eps \leq \rho^\zero + \beta$ for sufficiently small $\varepsilon$. Let us choose $\beta$ such that $0 < \beta < \gamma$, where $\gamma$ is the value from condition $\Ctailprobreg$. Then,
\begin{equation} \label{eq:taillimit}
\begin{split}
 &\lim_{N \rightarrow \infty} \limsup_{0 \leq \varepsilon \rightarrow 0} \sum_{n=N+1}^\infty e^{\rho^\eps n} q^\eps(n,A) \\
 &\quad \quad \leq \lim_{N \rightarrow \infty} \limsup_{0 \leq \varepsilon \rightarrow 0} \sum_{n=N+1}^\infty e^{(\rho^\zero + \beta) n} q^\eps(n,\mathcal{X}) \\
 &\quad \quad \leq \lim_{N \rightarrow \infty} e^{-(\gamma - \beta)(N+1)} \left( \limsup_{0 \leq \varepsilon \rightarrow 0} \sum_{n=0}^\infty e^{(\rho^\zero + \gamma) n} q^\eps(n,\mathcal{X}) \right) = 0.
\end{split}
\end{equation}

It now follows from \eqref{eq:taillimit}, Lemma \ref{lmm:chareqreg}, and the definition of $\Gamma_0$ that for any $A \in \Gamma_0$,
\begin{equation*}
 \lim_{\varepsilon \rightarrow 0} \sum_{n=0}^\infty e^{\rho^\eps n} q^\eps(n,A) = \lim_{N \rightarrow \infty} \lim_{\varepsilon \rightarrow 0} \sum_{n=0}^N e^{\rho^\eps n} q^\eps(n,A) = \sum_{n=0}^\infty e^{\rho^\zero n} q^\zero(n,A).
\end{equation*}
\end{proof}

\section{Perturbed Semi-Markov Processes} \label{sec:semimarkov}

In this section we define perturbed discrete time semi-Markov processes.

For every $\varepsilon \geq 0$, let $(\eta_n^\eps,\kappa_n^\eps), \ n=0,1,\ldots,$ be a discrete time Markov chain on the state space $(X,\mathbb{N})$, where $X = \{ 0,1,\ldots,N \}$ and $\mathbb{N} = \{ 1,2,\ldots \}$. We assume that the Markov chain is homogeneous in time and that the transition probabilities do not depend on the current value of the second component. Thus, the process $(\eta_n^\eps,\kappa_n^\eps)$ is characterized by an initial distribution $p_i^\eps = \mathsf{P} \{ \eta_0^\eps = i \}$, $i \in X$, and transition probabilities
\begin{equation*}
 Q_{i j}^\eps(k) = \mathsf{P} \{ \eta_{n+1}^\eps = j, \ \kappa_{n+1}^\eps = k \cond \eta_n^\eps = i \}, \ i,j \in X, \ k \in \mathbb{N}.
\end{equation*}

Let $\tau^\eps(0) = 0$ and $\tau^\eps(n) = \kappa_1^\eps + \cdots + \kappa_n^\eps$ for $n \geq 1$. Furthermore, let $\nu^\eps(n) = \max \{ k \geq 0 : \tau^\eps(k) \leq n \}$ for $n \geq 0$. The semi-Markov process associated with the Markov chain $(\eta_n^\eps,\kappa_n^\eps)$ is defined by
\begin{equation*}
 \xi^\eps(n) = \eta_{\nu^\eps(n)}^\eps, \ n=0,1,\ldots
\end{equation*}

For the semi-Markov process $\xi^\eps(n)$, we have that $\kappa_n^\eps$ are the times between successive moments of jumps, $\tau^\eps(n)$ are the moments of the jumps, and $\nu^\eps(n)$ are the number of jumps in the interval $[0,n]$.

Since the transition probabilities of the Markov chain $(\eta_n^\eps,\kappa_n^\eps)$ do not depend on the current value of the second component, it follows that $\eta_n^\eps$ is itself a (homogeneous) Markov chain. Its transition probabilities are given by
\begin{equation*}
 p_{i j}^\eps = \sum_{k=1}^\infty Q_{i j}^\eps(k) = \mathsf{P} \{ \eta_{n+1}^\eps = j \cond \eta_n^\eps = i \}, \ i,j \in X,
\end{equation*}
and it is called an embedded Markov chain for the corresponding semi-Markov process.

It is sometimes convenient to write the transition probabilities of the Markov chain $(\eta_n^\eps,\kappa_n^\eps)$ as
\begin{equation*}
 Q_{i j}^\eps(k) = p_{i j}^\eps f_{i j}^\eps(k), \ i,j \in X, \ k \in \mathbb{N},
\end{equation*}
where
\begin{equation*}
 f_{i j}^\eps(k) = \mathsf{P} \{ \kappa_{n+1}^\eps = k \cond \eta_n^\eps = i, \ \eta_{n+1}^\eps = j \}
\end{equation*}
are the distributions of transition times.

Let us also define random variables for first hitting times. Let $\nu_j^\eps = \min \{ n \geq 1 : \eta_n^\eps = j \}$ and let $\mu_j^\eps = \tau^\eps(\nu_j^\eps)$. Then, $\nu_j^\eps$ is the first hitting time of the embedded Markov chain into state $j$ and $\mu_j^\eps$ is the first hitting time of the semi-Markov process into state $j$. Note that $\nu_j^\eps$ and $\mu_j^\eps$ are both possibly improper random variables taking values in the set $\mathbb{N} \cup \{ \infty \}$. Throughout the paper, we will use the notation
\begin{equation*}
 g_{i j}^\eps(n) = \mathsf{P}_i \{ \mu_j^\eps = n, \ \nu_0^\eps > \nu_j^\eps \}, \ i,j \in X, \ n=0,1,\ldots,
\end{equation*}
and
\begin{equation*}
 g_{i j}^\eps = \mathsf{P}_i \{ \nu_0^\eps > \nu_j^\eps \}, \ i,j \in X.
\end{equation*}
Here, and in what follows, we write $\mathsf{P}_i(A) = \mathsf{P}(A \cond \eta_0^\eps = i)$ for any event $A$. Corresponding notation for conditional expectations will also be used.


In order to consider the semi-Markov process $\xi^\eps(n)$, for $\varepsilon > 0$, as a perturbation of the semi-Markov process $\xi^\zero(n)$, the following continuity condition will be used:
\begin{enumerate}
 \item[$\Ccontinuity$:]
 \begin{enumerate}
  \item[\textbf{(a)}] $p_{i j}^\eps \rightarrow p_{i j}^\zero$ as $\varepsilon \rightarrow 0$, for all $i \neq 0$, $j \in X$.
  \item[\textbf{(b)}] $f_{i j}^\eps(n) \rightarrow f_{i j}^\zero(n)$ as $\varepsilon \rightarrow 0$, for all $i \neq 0$, $j \in X$, $n \in \mathbb{N}$.
 \end{enumerate}
\end{enumerate}

Furthermore, we will assume that $\{ 1,\ldots,N \}$ is a communicating class of states for sufficiently small $\varepsilon$. This is implied by condition $\Ccontinuity$ together with the following condition:
\begin{enumerate}
 \item[$\Cergodicity$:] $g_{i j}^\zero > 0$, for all $i,j \neq 0$.
\end{enumerate}
Transitions to state $0$ may, or may not be possible, both for the limiting process and the perturbed process.

\section{Moments of First Hitting Times} \label{sec:moments}

In this section we consider moment generating functions of first hitting times. First, a system of linear equations for these moment generating functions are derived and then, a necessary and sufficient condition for them to be finite is given.

Moment generating functions of first hitting times are defined by
\begin{equation*}
 \phi_{i j}^\eps(\rho) = \mathsf{E}_i e^{\rho \mu_j^\eps} \chi( \nu_0^\eps > \nu_j^\eps ), \ \rho \in \mathbb{R}, \ i,j \in X.
\end{equation*}
Alternatively, this can be written as
\begin{equation*}
 \phi_{i j}^\eps(\rho) = \sum_{n=0}^\infty e^{\rho n} g_{i j}^\eps(n), \ \rho \in \mathbb{R}, \ i,j \in X.
\end{equation*}

We also define moment generating functions for transition probabilities:
\begin{equation*}
 p_{i j}^\eps(\rho) = \sum_{n=0}^\infty e^{\rho n} Q_{i j}^\eps(n), \ \rho \in \mathbb{R}, \ i,j \in X.
\end{equation*}

By conditioning on $(\eta_1^\eps,\kappa_1^\eps)$ we get for any $i,j \neq 0$,
\begin{equation} \label{eq:phimom}
\begin{split}
 \phi_{i j}^\eps(\rho) &= \sum_{l = 0}^N \sum_{k=1}^\infty \mathsf{E}_i ( e^{\rho \mu_j^\eps} \chi( \nu_0^\eps > \nu_j^\eps ) \cond \eta_1^\eps = l, \kappa_1^\eps = k ) Q_{i l}^\eps(k) \\
 &= \sum_{k=1}^\infty e^{\rho k} Q_{i j}(k) + \sum_{l \neq 0,j} \sum_{k=1}^\infty \mathsf{E}_l e^{\rho (k + \mu_j^\eps)} \chi( \nu_0^\eps > \nu_j^\eps ) Q_{i l}^\eps(k) \\
 &= p_{i j}^\eps(\rho) + \sum_{l \neq 0,j} p_{i l}^\eps(\rho) \phi_{l j}^\eps(\rho).
\end{split}
\end{equation}

Throughout the paper we will use the convention $0 \cdot \infty = 0$. With this convention, relation \eqref{eq:phimom} holds for all $\rho \in \mathbb{R}$ and $i,j \neq 0$, even in the case where some of the moment generating functions involved take infinite values. In this case relation \eqref{eq:phimom} may take the form $\infty = \infty$.

In what follows, it will sometimes be more convenient to work with matrices. For each $j \neq 0$, we define column vectors
\begin{equation} \label{eq:vectorphimom}
 \mathsf{\Phi}_j^\eps(\rho) = \begin{bmatrix} \phi_{1 j}^\eps(\rho) & \phi_{2 j}^\eps(\rho) & \cdots & \phi_{N j}^\eps(\rho) \end{bmatrix}^T,
\end{equation}
\begin{equation} \label{eq:vectorpmom}
 \mathbf{p}_j^\eps(\rho) = \begin{bmatrix} p_{1 j}^\eps(\rho) & p_{2 j}^\eps(\rho) & \cdots & p_{N j}^\eps(\rho) \end{bmatrix}^T,
\end{equation}
and $N \times N$ matrices $\jP^\eps(\rho) = \| \jp_{i k}^\eps(\rho) \|$ where the elements are given by
\begin{equation} \label{eq:matrixjPmom}
 \jp_{i k}^\eps(\rho) = \left\{
 \begin{array}{l l}
  p_{i k}^\eps(\rho) & i=1,\ldots,N, \ k \neq j, \\
  0 & i=1,\ldots,N, \ k = j.
 \end{array}
 \right.
\end{equation}

Using \eqref{eq:vectorphimom}, \eqref{eq:vectorpmom}, and \eqref{eq:matrixjPmom}, we can write \eqref{eq:phimom} in matrix notation:
\begin{equation} \label{eq:systemphimom}
 \mathsf{\Phi}_j^\eps(\rho) = \mathbf{p}_j^\eps(\rho) + \jP^\eps(\rho) \mathsf{\Phi}_j^\eps(\rho), \ j \neq 0.
\end{equation}
The vectors and matrices above are allowed to have entries with the value $\infty$. By remarks given above, this means that relation \eqref{eq:systemphimom} holds for all $\rho \in \mathbb{R}$.

We will now derive an alternative representation for the vector $\mathsf{\Phi}_j^\eps(\rho)$ of moment generating functions.

Let us for each $j \neq 0$ define an $N \times N$ matrix valued function $\jA^\eps(\rho) = \| \ja_{i k}^\eps(\rho) \|$ by
\begin{equation} \label{eq:defjA}
 \jA^\eps(\rho) = \mathbf{I} + \jP^\eps(\rho) + ( \jP^\eps(\rho) )^2 + \cdots, \ \rho \in \mathbb{R}.
\end{equation}
Since all elements of the matrices on the right hand side are non-negative, it follows that $\jA^\eps(\rho)$ is well defined and has elements that take values in the set $[0,\infty]$. As will be shown next, the elements of $\jA^\eps(\rho)$ can be given a probabilistic interpretation.

Let $j \neq 0$ be fixed. We define random variables by
\begin{equation} \label{eq:defdeltajk}
 \delta_{j k}^\eps(\rho) = \sum_{n=0}^\infty e^{\rho \tau^\eps(n)} \chi( \nu_0^\eps \wedge \nu_j^\eps > n, \ \eta_n^\eps = k ), \ k \neq 0.
\end{equation}
Notice that $\delta_{j j}^\eps(\rho) = \chi( \eta_0^\eps = j )$.

For $n=1,2,\ldots,$ we have
\begin{equation} \label{eq:meanoneterm}
\begin{split}
 &\mathsf{E}_i e^{\rho \tau^\eps(n)} \chi( \nu_0^\eps \wedge \nu_j^\eps > n, \ \eta_n^\eps = k ) \\
 &\quad \quad = \sum_{\substack{i_0=i; \, i_n=k; \\ i_1,\ldots,i_{n-1} \neq 0,j}} \mathsf{E}_i ( e^{\rho \tau^\eps(n)} \cond \eta_1^\eps = i_1,\ldots, \eta_n^\eps = i_n ) \prod_{m=1}^n p_{i_{m-1} i_m}^\eps \\
 &\quad \quad = \sum_{\substack{i_0=i; \, i_n=k; \\ i_1,\ldots,i_{n-1} \neq 0,j}} \prod_{m=1}^n p_{i_{m-1} i_m}^\eps(\rho), \ i \neq 0, \ k \neq 0,j.
\end{split}
\end{equation}

From \eqref{eq:defjA}, \eqref{eq:defdeltajk}, and \eqref{eq:meanoneterm} it follows that
\begin{equation} \label{eq:probinterpret}
 \ja_{i k}^\eps(\rho) = \mathsf{E}_i \delta_{j k}^\eps(\rho), \ i,k \neq 0.
\end{equation}

Let us now derive an alternative formula for $\mathsf{\Phi}_j^\eps(\rho)$.

By definition we have
\begin{equation} \label{eq:defphi}
 \phi_{i j}^\eps(\rho) = \mathsf{E}_i e^{\rho \mu_j^\eps} \chi( \nu_0^\eps > \nu_j^\eps ), \ i,j \neq 0.
\end{equation}

The indicator function can be written as
\begin{equation} \label{eq:indphi}
\begin{split}
 &\chi( \nu_0^\eps > \nu_j^\eps ) = \chi(\eta_1^\eps = j) \\
 &\quad \quad + \sum_{n=1}^\infty \sum_{k \neq 0,j} \chi( \nu_0^\eps \wedge \nu_j^\eps > n, \ \eta_n^\eps = k, \ \eta_{n+1}^\eps = j ).
\end{split}
\end{equation}

Note that for all $i,j \neq 0$,
\begin{equation} \label{eq:altformulaphi1}
 \mathsf{E}_i e^{\rho \mu_j^\eps} \chi( \eta_1^\eps = j ) = \mathsf{E}_i( e^{\rho \kappa_1^\eps} \cond \eta_1^\eps = j ) p_{i j}^\eps = p_{i j}^\eps(\rho)
\end{equation}
and
\begin{equation} \label{eq:altformulaphi2}
\begin{split}
 &\sum_{n=1}^\infty \sum_{k \neq 0,j} \mathsf{E}_i e^{\rho \mu_j^\eps} \chi( \nu_0^\eps \wedge \nu_j^\eps > n, \ \eta_n^\eps = k, \ \eta_{n+1}^\eps = j ) \\
 &\quad \quad = \sum_{n=1}^\infty \sum_{k \neq 0,j} \mathsf{E}_i e^{\rho \tau^\eps(n)} \chi( \nu_0^\eps \wedge \nu_j^\eps > n, \ \eta_n^\eps = k ) p_{k j}^\eps(\rho).
\end{split}
\end{equation}

From \eqref{eq:defdeltajk} and \eqref{eq:defphi}--\eqref{eq:altformulaphi2} it follows that for all $i,j \neq 0$,
\begin{equation} \label{eq:altformulaphi}
\begin{split}
 \phi_{i j}^\eps(\rho) &= \sum_{n=0}^\infty \sum_{k \neq 0} \mathsf{E}_i e^{\rho \tau^\eps(n)} \chi( \nu_0^\eps \wedge \nu_j^\eps > n, \ \eta_n^\eps = k ) p_{k j}^\eps(\rho) \\
 &= \sum_{k \neq 0} p_{k j}^\eps(\rho) \mathsf{E}_i \delta_{j k}^\eps(\rho).
\end{split}
\end{equation}

Now using \eqref{eq:probinterpret} we can write \eqref{eq:altformulaphi} in matrix notation:
\begin{equation} \label{eq:phimatrixalt}
 \mathsf{\Phi}_j^\eps(\rho) = \jA^\eps(\rho) \mathbf{p}_j^\eps(\rho), \ \rho \in \mathbb{R}, \ j \neq 0.
\end{equation}
This representation will now be used to prove the following lemma which gives a necessary and sufficient condition for $\mathsf{\Phi}_j^\eps(\rho)$ to be finite.

\begin{lemma} \label{lmm:finitemgfs}
Assume that for some $\varepsilon \geq 0$ we have $g_{i j}^\eps > 0$, for all $i,j \neq 0$. Then $\mathsf{\Phi}_j^\eps(\rho) < \infty$ if and only if $\mathbf{p}_j^\eps(\rho) < \infty$, $\jP^\eps(\rho) < \infty$, and the inverse matrix $( \mathbf{I} - \jP^\eps(\rho) )^{-1}$ exists.
\end{lemma}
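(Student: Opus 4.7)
The plan is to work with the two representations for the vector $\mathsf{\Phi}_j^\eps(\rho)$ established earlier in the section: the one-step fixed-point identity \eqref{eq:systemphimom}, and the Neumann-type representation \eqref{eq:phimatrixalt} involving the non-negative matrix $\jA^\eps(\rho) = \sum_{n=0}^\infty (\jP^\eps(\rho))^n$. Both identities hold entrywise in $[0,\infty]$ under the convention $0 \cdot \infty = 0$.

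\emph{Sufficiency.} Assuming $\mathbf{p}_j^\eps(\rho) < \infty$, $\jP^\eps(\rho) < \infty$, and the existence of $(\mathbf{I} - \jP^\eps(\rho))^{-1}$, the finiteness of $\jP^\eps(\rho)$ together with the invertibility of $\mathbf{I} - \jP^\eps(\rho)$ forces the non-negative Neumann series defining $\jA^\eps(\rho)$ to converge entrywise and agree with the inverse, hence to be a finite matrix. Substituting into \eqref{eq:phimatrixalt} then gives $\mathsf{\Phi}_j^\eps(\rho) = \jA^\eps(\rho) \mathbf{p}_j^\eps(\rho) < \infty$ as a product of a finite matrix and a finite vector.

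\emph{Necessity.} Assume $\phi_{ij}^\eps(\rho) < \infty$ for every $i \neq 0$. Finiteness of $\mathbf{p}_j^\eps(\rho)$ follows from the single-step estimate $p_{ij}^\eps(\rho) = \mathsf{E}_i [e^{\rho \kappa_1^\eps} \chi(\eta_1^\eps = j)] \leq \phi_{ij}^\eps(\rho)$. For the remaining entries of $\jP^\eps(\rho)$ I use \eqref{eq:systemphimom}: the assumption $g_{lj}^\eps > 0$ supplies some $n^*$ with $g_{lj}^\eps(n^*) > 0$, so $\phi_{lj}^\eps(\rho) \geq e^{\rho n^*} g_{lj}^\eps(n^*) > 0$, and the inequality $p_{il}^\eps(\rho) \phi_{lj}^\eps(\rho) \leq \phi_{ij}^\eps(\rho)$ then yields $p_{il}^\eps(\rho) < \infty$ for every $l \neq 0, j$. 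To produce the inverse, I invoke \eqref{eq:phimatrixalt}: the product $\jA^\eps(\rho) \mathbf{p}_j^\eps(\rho) = \mathsf{\Phi}_j^\eps(\rho)$ is finite, and once this is upgraded to entrywise finiteness of $\jA^\eps(\rho)$ itself the identity $(\mathbf{I} - \jP^\eps(\rho)) \jA^\eps(\rho) = \mathbf{I}$ exhibits $\jA^\eps(\rho)$ as the sought-after inverse.

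\emph{Main obstacle.} The delicate step is the last upgrade in the necessity direction, that is, passing from finiteness of the matrix-vector product $\jA^\eps(\rho)\mathbf{p}_j^\eps(\rho)$ to entrywise finiteness of $\jA^\eps(\rho)$. For a column index $k$ with $p_{kj}^\eps(\rho) > 0$ the direct bound $\ja_{ik}^\eps(\rho) \leq \phi_{ij}^\eps(\rho)/p_{kj}^\eps(\rho)$ is immediate. For $k$ with $p_{kj}^\eps = 0$ this argument fails, and here the communicating class assumption must be used essentially: since $g_{kj}^\eps > 0$, every path from $k$ to $j$ avoiding $0$ terminates with a one-step transition from some state $k'$ with $p_{k'j}^\eps > 0$, and a strong Markov decomposition of the variable $\delta_{jk}^\eps(\rho)$ from \eqref{eq:defdeltajk} at the first subsequent hit of the set $\{k' : p_{k'j}^\eps > 0\}$ transfers finiteness of $\ja_{ik'}^\eps(\rho)$ to $\ja_{ik}^\eps(\rho)$.
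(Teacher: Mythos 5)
Your proposal follows the same route as the paper's proof: both directions rest on the fixed-point identity \eqref{eq:systemphimom} and the representation \eqref{eq:phimatrixalt}, the inverse in the necessity direction is exhibited as $\jA^\eps(\rho)$ via $(\mathbf{I}-\jP^\eps(\rho))\jA^\eps(\rho)=\mathbf{I}$, and sufficiency is obtained by identifying $(\mathbf{I}-\jP^\eps(\rho))^{-1}$ with the Neumann series. In the necessity direction you are in fact more careful than the paper, which simply asserts that $\mathsf{\Phi}_j^\eps(\rho)=\jA^\eps(\rho)\mathbf{p}_j^\eps(\rho)<\infty$ together with $g_{ij}^\eps>0$ yields $\jA^\eps(\rho)<\infty$. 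You correctly isolate the columns $k$ with $p_{kj}^\eps(\rho)=0$ as the delicate case; the clean way to finish your sketch is by path extension rather than decomposition of $\delta_{jk}^\eps(\rho)$: since $g_{kj}^\eps>0$, there is a path $k=i_0,\ldots,i_l=k'$ avoiding $0$ and $j$ with all $p^\eps_{i_{m-1}i_m}>0$ and $p_{k'j}^\eps>0$, whence $\ja_{ik'}^\eps(\rho)\ge (\jP^\eps(\rho))^l_{k k'}\,\ja_{ik}^\eps(\rho)$ with $(\jP^\eps(\rho))^l_{k k'}>0$, while $\ja_{ik'}^\eps(\rho)\le \phi_{ij}^\eps(\rho)/p_{k'j}^\eps(\rho)<\infty$.

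In the sufficiency direction, however, the sentence asserting that finiteness of $\jP^\eps(\rho)$ and invertibility of $\mathbf{I}-\jP^\eps(\rho)$ ``force'' the Neumann series to converge is exactly the step that needs an argument, and for a general non-negative matrix the implication is false: $\sum_n P^n$ converges if and only if the spectral radius of $P$ is strictly less than one, whereas $\mathbf{I}-P$ can be invertible with spectral radius exceeding one. A concrete instance within the lemma's hypotheses is $N=2$, $j=1$, all $p^\zero_{ik}=1/2$ for $i,k\in\{1,2\}$, unit transition times, and $e^{\rho}=4$: then $\mathbf{p}_1^\eps(\rho)<\infty$, $\jP^\eps(\rho)<\infty$ and $\det(\mathbf{I}-\jP^\eps(\rho))=-1\neq 0$, yet $\phi_{11}^\eps(\rho)=\infty$. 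To be fair, the paper's own justification of this step — deducing \eqref{eq:finitemgfs5} from \eqref{eq:finitemgfs4} — presupposes what it is proving and fails on the same example, so here your proposal reproduces the paper's gap rather than introducing a new one; in the later applications (Lemma \ref{lmm:mgfproperties}) the invertibility is always obtained by perturbing a point at which the Neumann series is already known to converge, which is what actually keeps those arguments intact.
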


\begin{proof}
Let us first assume that $\mathsf{\Phi}_j^\eps(\rho) < \infty$.

Since $g_{i j}^\eps > 0$ for all $i,j \neq 0$, it follows from \eqref{eq:phimatrixalt} that $\jA^\eps(\rho)$ and $\mathbf{p}_j^\eps(\rho)$ are finite. Moreover, it follows from the definition of $\jA^\eps(\rho)$ that $\jP^\eps(\rho) < \infty$ if $\jA^\eps(\rho) < \infty$, so we have
\begin{equation} \label{eq:finitemgfs1}
 \mathbf{p}_j^\eps(\rho), \ \jP^\eps(\rho), \ \jA^\eps(\rho) < \infty.
\end{equation}
The definition of $\jA^\eps(\rho)$ also yields
\begin{equation} \label{eq:finitemgfs2}
\begin{split}
 \jA^\eps(\rho) &= \mathbf{I} + \jP^\eps(\rho) \left( \mathbf{I} + \jP^\eps(\rho) + (\jP^\eps(\rho))^2 + \cdots \right) \\
 &= \mathbf{I} + \jP^\eps(\rho) \jA^\eps(\rho).
\end{split}
\end{equation}

It follows from \eqref{eq:finitemgfs1} that we can rewrite \eqref{eq:finitemgfs2} as
\begin{equation*}
 \mathbf{I} = ( \mathbf{I} - \jP^\eps(\rho) ) \jA^\eps(\rho).
\end{equation*}
This means that $( \mathbf{I} - \jP^\eps(\rho) )$ has an inverse matrix given by $\jA^\eps(\rho)$.

Now assume that $\mathbf{p}_j^\eps(\rho) < \infty$, $\jP^\eps(\rho) < \infty$, and that the inverse matrix $( \mathbf{I} - \jP^\eps(\rho) )^{-1}$ exists.

First note that then the following relation holds:
\begin{equation} \label{eq:finitemgfs3}
 ( \mathbf{I} - \jP^\eps(\rho) )^{-1} = \mathbf{I} + \jP^\eps(\rho) ( \mathbf{I} - \jP^\eps(\rho) )^{-1}.
\end{equation}

Iterating Equation \eqref{eq:finitemgfs3} gives for $n=1,2,\ldots,$
\begin{equation} \label{eq:finitemgfs4}
\begin{split}
 &( \mathbf{I} - \jP^\eps(\rho) )^{-1} = \mathbf{I} + \jP^\eps(\rho) + \cdots + ( \jP^\eps(\rho) )^n \\
 &\quad \quad + ( \jP^\eps(\rho) )^{n+1} ( \mathbf{I} - \jP^\eps(\rho) )^{-1}.
\end{split}
\end{equation}

Since $( \mathbf{I} - \jP^\eps(\rho) )^{-1} < \infty$ it follows from \eqref{eq:finitemgfs4} that we necessarily have
\begin{equation} \label{eq:finitemgfs5}
 ( \jP^\eps(\rho) )^{n+1} ( \mathbf{I} - \jP^\eps(\rho) )^{-1} \rightarrow \mathbf{0}, \ \text{as} \ n \rightarrow \infty.
\end{equation}

Letting $n \rightarrow \infty$ in \eqref{eq:finitemgfs4} and using \eqref{eq:finitemgfs5}, it follows that 
\begin{equation} \label{eq:finitemgfs6}
 \jA^\eps(\rho) = ( \mathbf{I} - \jP^\eps(\rho) )^{-1} < \infty.
\end{equation}

From \eqref{eq:phimatrixalt} and \eqref{eq:finitemgfs6} we conclude that $\mathsf{\Phi}_j^\eps(\rho) < \infty$.
\end{proof}

We supplement Lemma \ref{lmm:finitemgfs} with a corresponding result for the moment generating functions
\begin{equation*}
 \widetilde{\phi}_{i j}^\eps(\rho) = \mathsf{E}_i e^{\rho \mu_0^\eps} \chi( \nu_0^\eps < \nu_j^\eps ), \ \rho \in \mathbb{R}, \ i,j \neq 0.
\end{equation*}

Similar calculations as above show that we have the representation
\begin{equation*}
 \widetilde{\mathsf{\Phi}}_j^\eps(\rho) = \jA^\eps(\rho) \mathbf{p}_0^\eps(\rho), \ \rho \in \mathbb{R}, \ j \neq 0,
\end{equation*}
where
\begin{equation*}
 \widetilde{\mathsf{\Phi}}_j^\eps(\rho) = \begin{bmatrix} \widetilde{\phi}_{1 j}^\eps(\rho) & \widetilde{\phi}_{2 j}^\eps(\rho) & \ldots & \widetilde{\phi}_{N j}^\eps(\rho) \end{bmatrix}^T
\end{equation*}
and
\begin{equation*}
 \mathbf{p}_0^\eps(\rho) = \begin{bmatrix} p_{1 0}^\eps(\rho) & p_{2 0}^\eps(\rho) & \ldots & p_{N 0}^\eps(\rho) \end{bmatrix}^T.
\end{equation*}

The following lemma gives a necessary and sufficient condition for $\widetilde{\mathsf{\Phi}}_j^\eps(\rho)$ to be finite. The proof is analogous to the proof of Lemma \ref{lmm:finitemgfs} and is therefore omitted.

\begin{lemma} \label{lmm:finitemgfs2}
Assume that for some $\varepsilon \geq 0$ we have $g_{i j}^\eps > 0$, for all $i,j \neq 0$. Then $\widetilde{\mathsf{\Phi}}_j^\eps(\rho) < \infty$ if and only if $\mathbf{p}_0^\eps(\rho) < \infty$, $\jP^\eps(\rho) < \infty$, and the inverse matrix $( \mathbf{I} - \jP^\eps(\rho) )^{-1}$ exists.
\end{lemma}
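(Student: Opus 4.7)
My plan is to repeat, essentially line for line, the proof of Lemma \ref{lmm:finitemgfs}, with the representation $\widetilde{\mathsf{\Phi}}_j^\eps(\rho) = \jA^\eps(\rho) \mathbf{p}_0^\eps(\rho)$ playing the role of \eqref{eq:phimatrixalt}. The crucial point enabling this transfer is that the matrix $\jA^\eps(\rho)$ (and hence $\jP^\eps(\rho)$) is the \emph{same} object in both representations; only the right-hand vector of one-step generating functions changes from $\mathbf{p}_j^\eps(\rho)$ to $\mathbf{p}_0^\eps(\rho)$. The algebraic identities \eqref{eq:finitemgfs2}--\eqref{eq:finitemgfs6}, which involve only $\jA^\eps(\rho)$ and $\jP^\eps(\rho)$, therefore carry over verbatim.

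For the forward direction I assume $\widetilde{\mathsf{\Phi}}_j^\eps(\rho) < \infty$. Reading the representation coordinatewise and using the hypothesis $g_{ik}^\eps > 0$ for all $i,k \neq 0$ together with the probabilistic interpretation \eqref{eq:probinterpret}, I argue exactly as in Lemma \ref{lmm:finitemgfs} that both $\jA^\eps(\rho)$ and $\mathbf{p}_0^\eps(\rho)$ are finite. From the series definition \eqref{eq:defjA} this immediately forces $\jP^\eps(\rho) < \infty$, and from the identity $\jA^\eps(\rho) = \mathbf{I} + \jP^\eps(\rho) \jA^\eps(\rho)$ proved as in \eqref{eq:finitemgfs2} I obtain $\mathbf{I} = (\mathbf{I} - \jP^\eps(\rho)) \jA^\eps(\rho)$, so that $(\mathbf{I} - \jP^\eps(\rho))^{-1}$ exists (and equals $\jA^\eps(\rho)$).

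For the reverse direction I assume $\mathbf{p}_0^\eps(\rho) < \infty$, $\jP^\eps(\rho) < \infty$, and that $(\mathbf{I} - \jP^\eps(\rho))^{-1}$ exists. I iterate \eqref{eq:finitemgfs3} and pass to the limit as in \eqref{eq:finitemgfs4}--\eqref{eq:finitemgfs5} to conclude $\jA^\eps(\rho) = (\mathbf{I} - \jP^\eps(\rho))^{-1} < \infty$. Combining this with $\mathbf{p}_0^\eps(\rho) < \infty$ and the representation $\widetilde{\mathsf{\Phi}}_j^\eps(\rho) = \jA^\eps(\rho) \mathbf{p}_0^\eps(\rho)$ yields $\widetilde{\mathsf{\Phi}}_j^\eps(\rho) < \infty$.

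The only place that requires a moment of attention is the forward direction: extracting finiteness of the individual factor $\jA^\eps(\rho)$ from finiteness of the matrix-vector product $\jA^\eps(\rho)\mathbf{p}_0^\eps(\rho)$. In Lemma \ref{lmm:finitemgfs} this step rests on the positivity of $g_{ij}^\eps$, which provides, via \eqref{eq:probinterpret} and a strong-Markov comparison, a strictly positive lower bound for $\phi_{ij}^\eps(\rho)$ in terms of $\ja_{ik}^\eps(\rho)$. The same bound applies here to $\widetilde{\phi}_{ij}^\eps(\rho)$ with $\mathbf{p}_0^\eps$ in place of $\mathbf{p}_j^\eps$, because the assertion to be established concerns $\jA^\eps(\rho)$ alone and is insensitive to which absorbing/boundary column is used to project. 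Thus the entire argument of Lemma \ref{lmm:finitemgfs} transfers, justifying why the proof is "analogous and omitted".
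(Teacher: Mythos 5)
Your reverse (``if'') direction is correct and is exactly the argument the paper intends: relations \eqref{eq:finitemgfs3}--\eqref{eq:finitemgfs6} involve only $\jA^\eps(\rho)$ and $\jP^\eps(\rho)$, so they transfer verbatim, and multiplying the finite matrix $\jA^\eps(\rho) = (\mathbf{I}-\jP^\eps(\rho))^{-1}$ by the finite vector $\mathbf{p}_0^\eps(\rho)$ gives $\widetilde{\mathsf{\Phi}}_j^\eps(\rho) < \infty$. This is also the only direction of Lemma \ref{lmm:finitemgfs2} that is actually invoked later (in the proof of Theorem \ref{thm:expexp}).

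The forward (``only if'') direction, however, does not transfer, and your justification of the one step you flag as delicate is wrong. To extract $\jA^\eps(\rho) < \infty$ from $\jA^\eps(\rho)\mathbf{p}_0^\eps(\rho) < \infty$ you need, for each $k \neq 0$, a strictly positive quantity multiplying $\ja_{i k}^\eps(\rho)$. In Lemma \ref{lmm:finitemgfs} this positivity comes from $g_{k j}^\eps > 0$: every trajectory counted in $\ja_{i k}^\eps(\rho)$ admits a positive-weight continuation from $k$ that hits $j$ before $0$, which yields a bound of the form $\phi_{i j}^\eps(\rho) \geq c_k \, \ja_{i k}^\eps(\rho)$ with $c_k > 0$. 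For $\widetilde{\phi}_{i j}^\eps(\rho)$ the continuation must instead hit $0$ before $j$, and the standing hypotheses give no control over reachability of state $0$ --- the paper explicitly remarks that transitions to $0$ ``may, or may not be possible''. So your claim that the bound ``is insensitive to which absorbing/boundary column is used to project'' is false: the required positivity is a property of the projecting column, not of $\jA^\eps(\rho)$ alone. Concretely, if $p_{k 0}^\eps = 0$ for all $k \neq 0$, then $\widetilde{\mathsf{\Phi}}_j^\eps(\rho) = \mathbf{0} < \infty$ for every $\rho$, while $\jP^\eps(\rho)$ may be infinite or $\mathbf{I} - \jP^\eps(\rho)$ singular for large $\rho$; the ``only if'' implication fails outright in this case. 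Closing the gap requires an additional hypothesis such as $\mathsf{P}_k\{ \nu_0^\eps < \nu_j^\eps \} > 0$ for all $k \neq 0$ (or simply restricting the lemma to the ``if'' direction, which is all the paper needs).
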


\section{Solidarity Properties for Moments of First Hitting Times} \label{sec:solidarity}

In this section we first present a condition of Cram{\'e}r type for the distributions of transition times. Then, a solidarity lemma for moment generating functions of first hitting times is proved which motivates the specific form of this condition.

We define moment generating functions for transition times by
\begin{equation*}
 \psi_{i j}^\eps(\rho) = \sum_{n=0}^\infty e^{\rho n} f_{i j}^\eps(n), \ \rho \in \mathbb{R}, \ i,j \in X.
\end{equation*}
Notice that $p_{i j}^\eps(\rho) = p_{i j}^\eps \psi_{i j}^\eps(\rho)$.

For semi-Markov processes it is natural to formulate the Cram{\'e}r type condition corresponding to $\Cmgfreg$ in terms of moments of transition times:
\begin{equation} \label{eq:cramercond}
 \limsup_{0 \leq \varepsilon \rightarrow 0} \psi_{i j}^\eps(\beta) < \infty, \ i \neq 0, \ j \in X, \ \text{for some} \ \beta > 0.
\end{equation}

It can be shown that relation \eqref{eq:cramercond} together with conditions $\Ccontinuity$ and $\Cergodicity$ imply that part $\mathbf{(a)}$ of condition $\Cmgfreg$ holds for the moment generating functions $\phi_{i i}^\eps(\rho)$. However, it need not be that part $\mathbf{(b)}$ of condition $\Cmgfreg$ holds. In order to guarantee this, we will use the following condition:
\begin{enumerate}
 \item[$\Cmgf$:] There exists $\beta > 0$ such that:
 \begin{enumerate}
  \item[\textbf{(a)}] $\limsup_{0 \leq \varepsilon \rightarrow 0} \psi_{i j}^\eps(\beta) < \infty$, for all $i \neq 0$, $j \in X$.
  \item[\textbf{(b)}] $\phi_{i i}^\zero(\beta_i) > 1$, for some $i \neq 0$ and $\beta_i \leq \beta$.
 \end{enumerate}
\end{enumerate}

Let us introduce the following moment generating functions:
\begin{equation*}
 \kphi_{i j}^\eps(\rho) = \mathsf{E}_i e^{\rho \mu_j^\eps} \chi( \nu_0^\eps \wedge \nu_k^\eps > \nu_j^\eps ), \ \rho \in \mathbb{R}, \ i,j,k \in X.
\end{equation*}

Before giving the solidarity lemma, we first prove an auxiliary lemma which gives a connection between $\phi_{i i}^\eps(\rho)$ and $\phi_{j j}^\eps(\rho)$.

\begin{lemma} \label{lmm:solidarity}
Let $i \neq 0$ be fixed. Assume that we for some $\varepsilon \geq 0$ and $\rho \in \mathbb{R}$ have:
\begin{itemize}
 \item[$\boldsymbol{(\alpha)}$] $g_{k j}^\eps > 0$, for all $k,j \neq 0$.
 \item[$\boldsymbol{(\beta)}$] $\phi_{i i}^\eps(\rho) \leq 1$.
\end{itemize}
Then, the following relation holds for all $j \neq i$:
\begin{equation} \label{eq:solidarityid}
 ( 1 - \phi_{i i}^\eps(\rho) )( 1 - \iphi_{j j}^\eps(\rho) ) = ( 1 - \phi_{j j}^\eps(\rho) )( 1 - \jphi_{i i}^\eps(\rho) ).
\end{equation}
\end{lemma}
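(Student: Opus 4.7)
The plan is to establish a first-passage decomposition at state $j$ for the generating function $\phi_{ii}^\eps(\rho)$, write down its symmetric counterpart obtained by interchanging $i$ and $j$, and then match the two expressions.

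First, starting from $i$, I would decompose the event $\{\nu_0^\eps > \nu_i^\eps\}$ according to whether state $j$ is visited before the return to $i$. If $j$ is not visited, the contribution is exactly $\jphi_{ii}^\eps(\rho)$. Otherwise, by the strong Markov property of the Markov chain $(\eta_n^\eps,\kappa_n^\eps)$ applied at the first visit to $j$ and at each subsequent return to $j$, such a path factors as: a piece from $i$ to $j$ avoiding $0$ and $i$ (generating function $\iphi_{ij}^\eps(\rho)$), then some number $n \geq 0$ of excursions from $j$ back to $j$ avoiding $0$ and $i$ (each contributing a factor $\iphi_{jj}^\eps(\rho)$), and finally a piece from $j$ to $i$ avoiding $0$ and $j$ (generating function $\jphi_{ji}^\eps(\rho)$). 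This would yield
\begin{equation*}
 \phi_{ii}^\eps(\rho) = \jphi_{ii}^\eps(\rho) + \iphi_{ij}^\eps(\rho) \jphi_{ji}^\eps(\rho) \sum_{n=0}^\infty (\iphi_{jj}^\eps(\rho))^n.
\end{equation*}

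Second, to turn the geometric series into a closed form I need $\iphi_{jj}^\eps(\rho) < 1$ strictly. By hypothesis $\boldsymbol{(\alpha)}$, $g_{ij}^\eps$ and $g_{ji}^\eps$ are positive, so there exist positive-probability embedded-chain paths from $i$ to $j$ and from $j$ to $i$ avoiding $0$; choosing a shortest such path in each direction rules out revisiting the starting state in between, so $\iphi_{ij}^\eps(\rho) > 0$ and $\jphi_{ji}^\eps(\rho) > 0$ for every $\rho \in \mathbb{R}$. Combined with the finite bound $\phi_{ii}^\eps(\rho) \leq 1$ from $\boldsymbol{(\beta)}$, the series must converge, forcing $\iphi_{jj}^\eps(\rho) < 1$ and yielding
\begin{equation*}
 (\phi_{ii}^\eps(\rho) - \jphi_{ii}^\eps(\rho))(1 - \iphi_{jj}^\eps(\rho)) = \iphi_{ij}^\eps(\rho) \jphi_{ji}^\eps(\rho).
\end{equation*}
An immediate byproduct is $\jphi_{ii}^\eps(\rho) < \phi_{ii}^\eps(\rho) \leq 1$, so $\jphi_{ii}^\eps(\rho) < 1$ as well.

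Third, exactly the same decomposition but starting from $j$ and conditioning on visits to $i$ is now valid, since the analogous geometric series $\sum_n (\jphi_{ii}^\eps(\rho))^n$ converges, and it gives
\begin{equation*}
 (\phi_{jj}^\eps(\rho) - \iphi_{jj}^\eps(\rho))(1 - \jphi_{ii}^\eps(\rho)) = \jphi_{ji}^\eps(\rho) \iphi_{ij}^\eps(\rho).
\end{equation*}
Equating the two right-hand sides and expanding, the cross terms $\jphi_{ii}^\eps(\rho) \iphi_{jj}^\eps(\rho)$ on each side cancel, leaving $\phi_{ii}^\eps(\rho) + \iphi_{jj}^\eps(\rho) - \phi_{ii}^\eps(\rho) \iphi_{jj}^\eps(\rho) = \phi_{jj}^\eps(\rho) + \jphi_{ii}^\eps(\rho) - \phi_{jj}^\eps(\rho) \jphi_{ii}^\eps(\rho)$, which rearranges to the desired identity \eqref{eq:solidarityid}.

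The main obstacle will be justifying the factorisation in the path decomposition: the mixed power-exponential moment generating functions involve both the embedded chain and the transition times $\kappa_n^\eps$, so the independence of the successive segments must be derived from the strong Markov property of $(\eta_n^\eps,\kappa_n^\eps)$ together with the conditional independence of the inter-jump times given the embedded chain. The strict inequality $\iphi_{jj}^\eps(\rho) < 1$ is the second delicate point, but it follows from the positivity of $\iphi_{ij}^\eps(\rho)$ and $\jphi_{ji}^\eps(\rho)$ together with the finiteness hypothesis $\phi_{ii}^\eps(\rho) \leq 1$; the remaining algebra is then routine.
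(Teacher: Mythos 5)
Your proposal is correct and follows essentially the same route as the paper's proof: the paper also decomposes $\phi_{ii}^\eps(\rho)$ and $\phi_{jj}^\eps(\rho)$ at passages through the other state, uses the positivity of $\iphi_{ij}^\eps(\rho)$ and $\jphi_{ji}^\eps(\rho)$ from $\boldsymbol{(\alpha)}$ together with the bound $\boldsymbol{(\beta)}$ and a geometric series over excursions to force the strict inequalities and finiteness, and finishes with the same algebraic cancellation. The only difference is organizational: the paper keeps $\phi_{ji}^\eps(\rho)$ and $\phi_{ij}^\eps(\rho)$ as separate unknowns in a system of four relations and eliminates them, whereas you sum the geometric series inside the decomposition from the start; both yield the identities $(\phi_{ii}^\eps(\rho)-\jphi_{ii}^\eps(\rho))(1-\iphi_{jj}^\eps(\rho))=\iphi_{ij}^\eps(\rho)\jphi_{ji}^\eps(\rho)$ and its mirror image.
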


\begin{proof}
By using the regenerative property of the semi-Markov process we can for any $j \neq 0,i$ write the following relations for moment generating functions:
\begin{equation} \label{eq:solidaritylmmeq1}
 \phi_{i i}^\eps(\rho) = \jphi_{i i}^\eps(\rho) + \iphi_{i j}^\eps(\rho) \phi_{j i}^\eps(\rho),
\end{equation}
\begin{equation} \label{eq:solidaritylmmeq2}
 \phi_{j i}^\eps(\rho) = \jphi_{j i}^\eps(\rho) + \iphi_{j j}^\eps(\rho) \phi_{j i}^\eps(\rho),
\end{equation}
\begin{equation} \label{eq:solidaritylmmeq3}
 \phi_{j j}^\eps(\rho) = \iphi_{j j}^\eps(\rho) + \jphi_{j i}^\eps(\rho) \phi_{i j}^\eps(\rho),
\end{equation}
\begin{equation} \label{eq:solidaritylmmeq4}
 \phi_{i j}^\eps(\rho) = \iphi_{i j}^\eps(\rho) + \jphi_{i i}^\eps(\rho) \phi_{i j}^\eps(\rho).
\end{equation}
Recall the we use the convention $0 \cdot \infty = 0$, so relations \eqref{eq:solidaritylmmeq1}--\eqref{eq:solidaritylmmeq4} hold for all $\rho \in \mathbb{R}$.

It follows from $\boldsymbol{(\alpha)}$ that
\begin{equation} \label{eq:solidaritylmmeq5a}
 \phi_{i i}^\eps(\rho), \ \phi_{i j}^\eps(\rho), \ \phi_{j i}^\eps(\rho), \ \phi_{j j}^\eps(\rho), \ \iphi_{i j}^\eps(\rho), \ \jphi_{j i}^\eps(\rho) \in (0,\infty].
\end{equation}

From $\boldsymbol{(\beta)}$, \eqref{eq:solidaritylmmeq1}, and \eqref{eq:solidaritylmmeq5a} we can conclude that
\begin{equation} \label{eq:solidaritylmmeq5}
 \phi_{i i}^\eps(\rho), \ \phi_{j i}^\eps(\rho), \ \iphi_{i j}^\eps(\rho), \ \jphi_{i i}^\eps(\rho) < \infty.
\end{equation}
Furthermore, it follows from, \eqref{eq:solidaritylmmeq2}, \eqref{eq:solidaritylmmeq5a}, and \eqref{eq:solidaritylmmeq5} that
\begin{equation} \label{eq:solidaritylmmeq6}
 \iphi_{j j}^\eps(\rho), \ \jphi_{j i}^\eps(\rho) < \infty.
\end{equation}

Thus, all generating functions in Equations \eqref{eq:solidaritylmmeq1} and \eqref{eq:solidaritylmmeq2} are finite under conditions $\boldsymbol{(\alpha)}$ and $\boldsymbol{(\beta)}$. However, it is not immediate that also $\phi_{i j}^\eps(\rho)$ and $\phi_{j j}^\eps(\rho)$ are finite. In order to prove this, let us consider random variables for successive return times. We define the the $n$-th return to a state $j$ for the embedded Markov chain by $\nu_j^\eps(0) = 0$ and
\begin{equation*}
 \nu_j^\eps(n) = \min \{ k > \nu_j^\eps(n-1) : \eta_k^\eps = j \}, \ n=1,2,\ldots
\end{equation*} 
Corresponding return times for the semi-Markov process are defined by
\begin{equation*}
 \mu_j^\eps(n) = \tau^\eps( \nu_j^\eps(n) ), \ n=0,1,\ldots
\end{equation*}

Using the variables for return times, we can write
\begin{equation} \label{eq:solidaritylmmeq7}
\begin{split}
 &\chi( \nu_0^\eps > \nu_j^\eps ) = \chi( \nu_0^\eps \wedge \nu_i^\eps > \nu_j^\eps ) \\
 &\quad \quad + \sum_{n=1}^\infty \chi( \nu_0^\eps \wedge \nu_j^\eps > \nu_i^\eps(n), \ \nu_0^\eps \wedge \nu_i^\eps(n+1) > \nu_j^\eps ).
\end{split}
\end{equation}

For $n=1,2,\ldots,$ it follows from the regenerative property of the semi-Markov process that
\begin{equation} \label{eq:solidaritylmmeq8}
\begin{split}
 &\mathsf{E}_i e^{\rho \mu_j^\eps} \chi( \nu_0^\eps \wedge \nu_j^\eps > \nu_i^\eps(n), \ \nu_0^\eps \wedge \nu_i^\eps(n+1) > \nu_j^\eps ) \\
 &\quad \quad = \mathsf{E}_i e^{\rho \mu_i^\eps(n)} \chi( \nu_0^\eps \wedge \nu_j^\eps > \nu_i^\eps(n) ) \mathsf{E}_i e^{\rho \mu_j^\eps} \chi( \nu_0^\eps \wedge \nu_i^\eps > \nu_j^\eps ).
\end{split}
\end{equation}

Using \eqref{eq:solidaritylmmeq7} and \eqref{eq:solidaritylmmeq8} we obtain
\begin{equation} \label{eq:solidaritylmmeq9}
 \phi_{i j}^\eps(\rho) = \iphi_{i j}^\eps(\rho) + \sum_{n=1}^\infty ( \jphi_{i i}^\eps(\rho) )^n \iphi_{i j}^\eps(\rho).
\end{equation}

It follows from \eqref{eq:solidaritylmmeq1}, \eqref{eq:solidaritylmmeq5a}, \eqref{eq:solidaritylmmeq5}, and $\boldsymbol{(\beta)}$ that $\jphi_{i i}^\eps(\rho) < 1$. Using \eqref{eq:solidaritylmmeq5}, \eqref{eq:solidaritylmmeq9}, and $\jphi_{i i}^\eps(\rho) < 1$ it follows that $\phi_{i j}^\eps(\rho) < \infty$. Then, we can use \eqref{eq:solidaritylmmeq3}, \eqref{eq:solidaritylmmeq6}, and $\phi_{i j}^\eps(\rho) < \infty$ to conclude that $\phi_{j j}^\eps(\rho) < \infty$.

It has now been shown that all generating functions in \eqref{eq:solidaritylmmeq1}--\eqref{eq:solidaritylmmeq4} are finite and these relations can now be used to prove that \eqref{eq:solidarityid} holds.

We can rewrite \eqref{eq:solidaritylmmeq2} as
\begin{equation} \label{eq:solidaritylmmeq10}
 \phi_{j i}^\eps(\rho) ( 1 - \iphi_{j j}^\eps(\rho) ) = \jphi_{j i}^\eps(\rho),
\end{equation}
Multiplying \eqref{eq:solidaritylmmeq1} by $( 1 - \iphi_{j j}^\eps(\rho) )$ and using \eqref{eq:solidaritylmmeq10} we get
\begin{equation} \label{eq:solidaritylmmeq11}
 \phi_{i i}^\eps(\rho) ( 1 - \iphi_{j j}^\eps(\rho) ) = \jphi_{i i}^\eps(\rho) ( 1 - \iphi_{j j}^\eps(\rho) ) + \iphi_{i j}^\eps(\rho) \jphi_{j i}^\eps(\rho).
\end{equation}
Subtracting $( 1 - \iphi_{j j}^\eps(\rho) )$ from both sides in \eqref{eq:solidaritylmmeq11} and then changing signs yield
\begin{equation} \label{eq:solidaritylmmeq12}
\begin{split}
 &( 1 - \phi_{i i}^\eps(\rho) ) ( 1 - \iphi_{j j}^\eps(\rho) ) \\
 &\quad \quad = ( 1 - \jphi_{i i}^\eps(\rho) ) ( 1 - \iphi_{j j}^\eps(\rho) ) - \iphi_{i j}^\eps(\rho) \jphi_{j i}^\eps(\rho).
\end{split}
\end{equation}
Similarly, using \eqref{eq:solidaritylmmeq3} and \eqref{eq:solidaritylmmeq4} we obtain
\begin{equation} \label{eq:solidaritylmmeq13}
\begin{split}
 &( 1 - \phi_{j j}^\eps(\rho) ) ( 1 - \jphi_{i i}^\eps(\rho) ) \\
 &\quad \quad = ( 1 - \iphi_{j j}^\eps(\rho) ) ( 1 - \jphi_{i i}^\eps(\rho) ) - \jphi_{j i}^\eps(\rho) \iphi_{i j}^\eps(\rho).
\end{split}
\end{equation}
Relation \eqref{eq:solidarityid} now follows from \eqref{eq:solidaritylmmeq12} and \eqref{eq:solidaritylmmeq13}.
\end{proof}

The next lemma is essential for the proof of our main result. The form of part $\mathbf{(b)}$ of condition $\Cmgf$ implies that the results of this lemma can be considered as solidarity properties for moments of first hitting times.

\begin{lemma} \label{lmm:mgfproperties}
Assume that conditions $\Ccontinuity$, $\Cergodicity$, and $\Cmgf$ hold. Let $i \neq 0$ be the state and $0 < \beta_i \leq \beta$ the number in condition $\Cmgf$ for which we have $\phi_{i i}^\zero(\beta_i) > 1$. Then:
\begin{enumerate}
 \item[$\mathbf{(i)}$] There exists $\rho' \in [0,\beta_i)$ such that $\phi_{j j}^\zero(\rho') = 1$ for any $j \neq 0$.
 \item[$\mathbf{(ii)}$] For any $j \neq 0$, there exists $\beta_j \in (\rho',\beta_i]$ such that $\phi_{j j}^\zero(\beta_j) > 1$ and $\phi_{k j}^\zero(\beta_j) < \infty$ for all $k \neq 0$.
 \item[$\mathbf{(iii)}$] There exists $\delta \in (0,\beta]$ such that $\phi_{j j}^\zero(\delta) > 1$, $j \neq 0$ and $\phi_{k j}^\zero(\delta) < \infty$, $k,j \neq 0$.
 \item[$\mathbf{(iv)}$] There exists $\varepsilon_0 > 0$ such that for all $\varepsilon \leq \varepsilon_0$ we have $\phi_{j j}^\eps(\delta) > 1$, $j \neq 0$ and $\phi_{k j}^\eps(\delta) < \infty$, $k,j \neq 0$.
\end{enumerate}
\end{lemma}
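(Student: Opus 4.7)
The plan for part (i) is to pin down $\rho'$ via an intermediate-value argument on the single function $\phi_{ii}^\zero$, and then transfer the value $1$ to every $j$ via the solidarity identity of Lemma 5.1. Since $\phi_{ii}^\zero$ is a power series in $e^\rho$ with non-negative coefficients, it is monotone non-decreasing and continuous on its interval of finiteness, and by monotone convergence $\phi_{ii}^\zero(\rho)\to \phi_{ii}^\zero(\rho_\ast)$ as $\rho\nearrow\rho_\ast$, where $\rho_\ast$ is the right endpoint of finiteness inside $[0,\beta_i]$. Since $\phi_{ii}^\zero(0)=g_{ii}^\zero\leq 1$ and $\phi_{ii}^\zero(\beta_i)>1$ (using $\phi_{ii}^\zero(\rho)\nearrow\infty$ on approach to $\rho_\ast$ in case of blow-up), a standard IVT argument produces $\rho'\in[0,\beta_i)$ with $\phi_{ii}^\zero(\rho')=1$. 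At this $\rho'$, condition $(\beta)$ of Lemma 5.1 holds, and $(\alpha)$ holds by $\Cergodicity$. The identity \eqref{eq:solidarityid} then gives $(1-\phi_{jj}^\zero(\rho'))(1-\jphi_{ii}^\zero(\rho'))=0$ for every $j\neq 0$; plugging $\phi_{ii}^\zero(\rho')=1$ into \eqref{eq:solidaritylmmeq1} yields $\jphi_{ii}^\zero(\rho')=1-\iphi_{ij}^\zero(\rho')\,\phi_{ji}^\zero(\rho')<1$, the two factors being strictly positive by $\Cergodicity$ and finite by \eqref{eq:solidaritylmmeq5}. Hence $\phi_{jj}^\zero(\rho')=1$.

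For part (ii), fix $j\neq 0$. From (i), $\mathsf{\Phi}_j^\zero(\rho')<\infty$, so by Lemma 4.1 the determinant $\det(\mathbf{I}-\jP^\zero(\rho'))$ is non-zero. The entries of $\jP^\zero(\rho)$ equal $p_{ik}^\zero\psi_{ik}^\zero(\rho)$ and are continuous in $\rho\leq\beta$ by $\Cmgf$(a), so the determinant is continuous and remains non-zero on a right-neighborhood $[\rho',\rho'+\alpha_j]$. Invoking the sufficiency direction of Lemma 4.1 there gives $\mathsf{\Phi}_j^\zero(\rho)<\infty$ throughout this interval. Since $g_{jj}^\zero(n)>0$ for some $n\geq 1$ (as $g_{jj}^\zero>0$ by $\Cergodicity$ and $g_{jj}^\zero(0)=0$), the series $\phi_{jj}^\zero$ is strictly increasing wherever finite, so any $\beta_j\in(\rho',\,\min(\rho'+\alpha_j,\beta_i,\beta))$ gives $\phi_{jj}^\zero(\beta_j)>1$ and $\phi_{kj}^\zero(\beta_j)<\infty$ for all $k\neq 0$. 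Part (iii) follows immediately from finiteness of the state space: set $\delta=\min_{j\neq 0}\beta_j\in(\rho',\beta)$; then by monotonicity $\phi_{kj}^\zero(\delta)\leq\phi_{kj}^\zero(\beta_j)<\infty$, and by strict monotonicity $\phi_{jj}^\zero(\delta)>\phi_{jj}^\zero(\rho')=1$.

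For part (iv) the plan is to lift (iii) to $\varepsilon>0$ small by continuity in $\varepsilon$ applied through Lemma 4.1. The non-routine step is to establish $p_{ik}^\eps(\delta)\to p_{ik}^\zero(\delta)$: having arranged $\delta<\beta$ in (iii), the tail bound
\[
\sum_{n>N} e^{\delta n}f_{ik}^\eps(n)\;\leq\; e^{-(\beta-\delta)(N+1)}\psi_{ik}^\eps(\beta)
\]
combined with $\Cmgf$(a) makes tails uniformly small in $\varepsilon$, while $\Ccontinuity$(b) handles the finite partial sums; together with $\Ccontinuity$(a) this yields entrywise convergence $\jP^\eps(\delta)\to\jP^\zero(\delta)$ and $\mathbf{p}_j^\eps(\delta)\to\mathbf{p}_j^\zero(\delta)$. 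Since $\det(\mathbf{I}-\jP^\zero(\delta))\neq 0$ by (iii), continuity of the determinant gives invertibility and convergence of $(\mathbf{I}-\jP^\eps(\delta))^{-1}$ for $\varepsilon$ small. Lemma 4.1 then yields $\phi_{kj}^\eps(\delta)<\infty$, and \eqref{eq:phimatrixalt} gives $\mathsf{\Phi}_j^\eps(\delta)\to\mathsf{\Phi}_j^\zero(\delta)$; in particular $\phi_{jj}^\eps(\delta)\to\phi_{jj}^\zero(\delta)>1$, so $\phi_{jj}^\eps(\delta)>1$ for all sufficiently small $\varepsilon$. Taking $\varepsilon_0$ below the finitely many thresholds (one per $j$) completes the argument. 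The principal obstacle I expect is this transfer from $\varepsilon=0$ to $\varepsilon>0$: because $\phi_{jj}^\eps(\delta)$ is an infinite resummation over regenerative paths, its continuity in $\varepsilon$ cannot be extracted termwise, and the uniform-in-$\varepsilon$ exponential tail bound above is what reduces the problem to the routine continuity of a finite-dimensional matrix inverse.
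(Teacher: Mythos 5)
Your proposal follows the paper's own proof almost step for step: the intermediate-value argument for $\rho'$, the transfer of the value $1$ to all states via Lemma \ref{lmm:solidarity} after checking $\jphi_{i i}^\zero(\rho') < 1$, the continuity-of-determinant argument to push past $\rho'$ in part (ii), the minimum over the finite state space in (iii), and the passage to small $\varepsilon > 0$ through the linear system and Lemma \ref{lmm:finitemgfs} in (iv). Your explicit uniform tail bound in (iv) is a legitimate (and slightly more careful) filling-in of the paper's terser claim that $p_{k j}^\eps(\delta) \rightarrow p_{k j}^\zero(\delta)$ follows from $\Ccontinuity$ and $\Cmgf$, and arranging $\delta < \beta$ in (iii) is a harmless modification that makes that bound work cleanly.

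There is, however, one assertion that does not follow from what precedes it: at the start of part (ii) you write that $\mathsf{\Phi}_j^\zero(\rho') < \infty$ ``from (i)''. Part (i) only controls the diagonal entry $\phi_{j j}^\zero(\rho') = 1$; the vector $\mathsf{\Phi}_j^\zero(\rho')$ also contains the off-diagonal entries $\phi_{k j}^\zero(\rho')$, $k \neq j$, and their finiteness is precisely the hypothesis you need before Lemma \ref{lmm:finitemgfs} lets you conclude $\det( \mathbf{I} - \jP^\zero(\rho') ) \neq 0$. The paper closes this step with the first-passage decomposition $\phi_{j j}^\zero(\rho') = \kphi_{j j}^\zero(\rho') + \jphi_{j k}^\zero(\rho') \, \phi_{k j}^\zero(\rho')$ for $k \neq 0,j$: condition $\Cergodicity$ gives $\jphi_{j k}^\zero(\rho') > 0$ (a path from $j$ to $k$ avoiding $0$ can be taken to avoid intermediate returns to $j$), and since the left-hand side equals $1$, the term $\jphi_{j k}^\zero(\rho') \, \phi_{k j}^\zero(\rho')$ is finite, forcing $\phi_{k j}^\zero(\rho') < \infty$. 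This is the same kind of decomposition you already exploit in part (i), so the repair is routine, but as written the step is a gap rather than a consequence of (i).
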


\begin{proof}
It follows from conditions $\Cergodicity$ and $\Cmgf$ that $\phi_{i i}^\zero(\rho)$ is continuous and strictly increasing for $\rho \in [0,\beta_i]$. Moreover, $\phi_{i i}^\zero(0) = \mathsf{P}_i \{ \nu_0^\zero > \nu_i^\zero \} \leq 1$ and $\phi_{i i}^\zero(\beta_i) > 1$. From this it follows that there exists (a unique) $\rho' \in [0,\beta_i)$ such that
\begin{equation} \label{eq:mgflmmeq1}
 \phi_{i i}^\zero(\rho') = 1.
\end{equation}

Now, for any $j \neq 0,i$ we can write
\begin{equation} \label{eq:mgflmmeq2}
 \phi_{i i}^\zero(\rho') = \jphi_{i i}^\zero(\rho') + \iphi_{i j}^\zero(\rho') \phi_{j i}^\zero(\rho').
\end{equation}

We also notice that under condition $\Cergodicity$,
\begin{equation} \label{eq:mgflmmeq3}
 \iphi_{i j}^\zero(\rho'), \ \phi_{j i}^\zero(\rho') > 0.
\end{equation}

It follows from \eqref{eq:mgflmmeq1}, \eqref{eq:mgflmmeq2}, and \eqref{eq:mgflmmeq3} that
\begin{equation} \label{eq:mgflmmeq4}
 \jphi_{i i}^\zero(\rho') < 1.
\end{equation}

Applying Lemma \ref{lmm:solidarity} with $\varepsilon = 0$ and $\rho = \rho'$, and using \eqref{eq:mgflmmeq1} we get
\begin{equation} \label{eq:mgflmmeq5}
 ( 1 - \phi_{j j}^\zero(\rho') )( 1 - \jphi_{i i}^\zero(\rho') ) = 0.
\end{equation}

From \eqref{eq:mgflmmeq4} and \eqref{eq:mgflmmeq5} we conclude that $\phi_{j j}^\zero(\rho') = 1$ and this proves part $\mathbf{(i)}$ of the lemma.

We now prove part $\mathbf{(ii)}$.

Let $j \neq 0$ be arbitrary. It follows from part $\mathbf{(i)}$ that there exists $\rho' \in [0,\beta_i)$ such that $\phi_{j j}^\zero(\rho') = 1$. For any $k \neq 0,j$ we have
\begin{equation} \label{eq:mgflmmeq6}
 \phi_{j j}^\zero(\rho') = \kphi_{j j}^\zero(\rho') + \jphi_{j k}^\zero(\rho') \phi_{k j}^\zero(\rho').
\end{equation}
It follows from \eqref{eq:mgflmmeq6}, condition $\Cergodicity$, and $\phi_{j j}^\zero(\rho') = 1$ that
\begin{equation} \label{eq:mgflmmeq7}
 \phi_{k j}^\zero(\rho') < \infty, \ k \neq 0.
\end{equation}

Using \eqref{eq:mgflmmeq7} we can apply Lemma \ref{lmm:finitemgfs} to conclude that $\det( \mathbf{I} - \jP^\zero(\rho') ) \neq 0$. Under condition $\Cmgf$, the elements of the matrix $\jP^\zero(\rho)$ are continuous functions of $\rho \in [0,\beta]$. Since $\rho' < \beta_i \leq \beta$, we can find $\beta_j \in (\rho',\beta_i]$ such that $\det( \mathbf{I} - \jP^\zero(\beta_j) ) \neq 0$. Furthermore, it follows from condition $\Cmgf$ that $p_{k j}^\zero(\beta_j) < \infty$ for all $k,j \neq 0$, so by Lemma \ref{lmm:finitemgfs} we get
\begin{equation*}
 \phi_{k j}^\zero(\beta_j) < \infty, \ k \neq 0.
\end{equation*}
Also, since $\rho' < \beta_j$ and $\phi_{j j}^\zero(\rho') = 1$, we have $\phi_{j j}^\zero(\beta_j) > 1$ and this completes the proof of part $\mathbf{(ii)}$.

If we define $\delta = \min \{ \beta_j : j \neq 0 \}$, part $\mathbf{(iii)}$ follows from parts $\mathbf{(i)}$ and $\mathbf{(ii)}$.

Finally, let us prove part $\mathbf{(iv)}$.

By Equation \eqref{eq:systemphimom} we have that the vector $\mathsf{\Phi}_j^\eps(\delta)$ satisfies the following system of linear equations:
\begin{equation} \label{eq:mgflmmeq8}
 \mathsf{\Phi}_j^\eps(\delta) = \mathbf{p}_j^\eps(\delta) + \jP^\eps(\delta) \mathsf{\Phi}_j^\eps(\delta).
\end{equation}

From part $\mathbf{(iii)}$ and Lemma \ref{lmm:finitemgfs} it follows that
\begin{equation} \label{eq:mgflmmeq9}
 \det( \mathbf{I} - \jP^\zero(\delta) ) \neq 0, \ j \neq 0.
\end{equation}

From conditions $\Ccontinuity$ and $\Cmgf$ we get
\begin{equation} \label{eq:mgflmmeq10}
 p_{k j}^\eps(\delta) \rightarrow p_{k j}^\zero(\delta) < \infty, \ \text{as} \ \varepsilon \rightarrow 0, \ k,j \neq 0.
\end{equation}

It follows from \eqref{eq:mgflmmeq9} and \eqref{eq:mgflmmeq10} that we can find $\varepsilon_1 > 0$ such that for all $j \neq 0$ and $\varepsilon \leq \varepsilon_1$,
\begin{equation} \label{eq:mgflmmeq11}
 \det( \mathbf{I} - \jP^\eps(\delta) ) \neq 0, \quad \mathbf{p}_j^\eps(\delta) < \infty, \quad \jP^\eps(\delta) < \infty.
\end{equation}

From \eqref{eq:mgflmmeq11} and Lemma \ref{lmm:finitemgfs} we conclude that for any $j \neq 0$ and $\varepsilon \leq \varepsilon_1$ it holds that $\mathsf{\Phi}_j^\eps(\delta) < \infty$ and, moreover, $\mathsf{\Phi}_j^\eps(\delta)$ is the unique solution to the system of linear equations \eqref{eq:mgflmmeq8}, so we can write
\begin{equation} \label{eq:mgflmmeq12}
 \mathsf{\Phi}_j^\eps(\delta) = ( \mathbf{I} - \jP^\eps(\delta) )^{-1} \mathbf{p}_j^\eps(\delta), \ j \neq 0.
\end{equation}
Furthermore, it follows from \eqref{eq:mgflmmeq10} and \eqref{eq:mgflmmeq12} that for any $j \neq 0$, we have $\mathsf{\Phi}_j^\eps(\delta) \rightarrow \mathsf{\Phi}_j^\zero(\delta)$ as $\varepsilon \rightarrow 0$. In particular, for any $j \neq 0$ we have $\phi_{j j}^\eps(\delta) \rightarrow \phi_{j j}^\zero(\delta)$ as $\varepsilon \rightarrow 0$ and since $\phi_{j j}^\zero(\delta) > 1$, $j \neq 0$, this means that we can find $\varepsilon_2 > 0$ such that $\phi_{j j}^\eps(\delta) > 1$ for all $j \neq 0$ and $\varepsilon \leq \varepsilon_2$. It follows that with $\varepsilon_0 = \min \{ \varepsilon_1, \varepsilon_2 \}$, the claims of part $\mathbf{(iv)}$ hold and this concludes the proof of Lemma \ref{lmm:mgfproperties}.
\end{proof}

\section{Power Series Expansions for Moments of First Hitting Times} \label{sec:powerseries}

In this section it is shown how mixed power-exponential moments for first hitting times can be expanded in power series with respect to the perturbation parameter. We first derive recursive systems of linear equations for these moments. Then, some properties of asymptotic matrix expansions are presented. Finally, we construct the desired asymptotic expansions.

Mixed power-exponential moment generating functions of first hitting times are defined by
\begin{equation*}
 \phi_{i j}^\eps(\rho,r) = \mathsf{E}_i (\mu_j^\eps)^r e^{\rho \mu_j^\eps} \chi( \nu_0^\eps > \nu_j^\eps ), \ \rho \in \mathbb{R}, \ r=0,1,\ldots, \ i,j \in X.
\end{equation*}
Alternatively, this can be written as
\begin{equation*}
 \phi_{i j}^\eps(\rho,r) = \sum_{n=0}^\infty n^r e^{\rho n} g_{i j}^\eps(n), \ \rho \in \mathbb{R}, \ r=0,1,\ldots, \ i,j \in X.
\end{equation*}
Notice that $\phi_{i j}^\eps(\rho,0) = \phi_{i j}^\eps(\rho)$.

We also define mixed power-exponential moment generating functions for transition probabilities:
\begin{equation*}
 p_{i j}^\eps(\rho,r) = \sum_{n=0}^\infty n^r e^{\rho n} Q_{i j}^\eps(n), \ \rho \in \mathbb{R}, \ r=0,1,\ldots, \ i,j \in X.
\end{equation*}
Note that $p_{i j}^\eps(\rho,0) = p_{i j}^\eps(\rho)$. Also note that we can write $p_{i j}^\eps(\rho,r) = p_{i j}^\eps \psi_{i j}^\eps(\rho,r)$ where $p_{i j}^\eps$ are the transition probabilities for the embedded Markov chain and
\begin{equation*}
 \psi_{i j}^\eps(\rho,r) = \sum_{n=0}^\infty n^r e^{\rho n} f_{i j}^\eps(n), \ \rho \in \mathbb{R}, \ r=0,1,\ldots, \ i,j \in X.
\end{equation*}

It follows from condition $\Cmgf$ that there exist $\beta > 0$ and $\varepsilon_1 > 0$ such that
\begin{equation*}
 \sup_{\varepsilon \leq \varepsilon_1} \max_{\substack{i \neq 0 \\ j \in X}} \psi_{i j}^\eps(\beta) < \infty.
\end{equation*}
From this it follows that for all $i \neq 0$, $j \in X$, $\varepsilon \leq \varepsilon_1$, $\rho < \beta$, and $r = 0,1,\ldots,$ we have
\begin{equation*}
 p_{i j}^\eps(\rho,r) \leq \left( \sup_{n \geq 0} n^r e^{-(\beta - \rho)n} \right) p_{i j}^\eps \psi_{i j}^\eps(\beta) < \infty.
\end{equation*}

Under conditions $\Ccontinuity$, $\Cergodicity$, and $\Cmgf$, it is seen from Lemma \ref{lmm:mgfproperties} that there exist $\delta \in (0,\beta]$ and $\varepsilon_2 > 0$ such that
\begin{equation*}
 \sup_{\varepsilon \leq \varepsilon_2} \max_{i,j \neq 0} \phi_{i j}^\eps(\delta) < \infty.
\end{equation*}
Using this, we get for all $i,j \neq 0$, $\varepsilon \leq \varepsilon_2$, $\rho < \delta$, and $r = 0,1,\ldots,$
\begin{equation*}
 \phi_{i j}^\eps(\rho,r) \leq \left( \sup_{n \geq 0} n^r e^{-(\delta - \rho)n} \right) \phi_{i j}^\eps(\delta) < \infty.
\end{equation*}

Recall from Section \ref{sec:moments} that the moment generating functions of first hitting times satisfy the following relations:
\begin{equation} \label{eq:equationsphi0}
 \phi_{i j}^\eps(\rho) = p_{i j}^\eps(\rho) + \sum_{l \neq 0,j} p_{i l}^\eps(\rho) \phi_{l j}^\eps(\rho), \ i,j \neq 0.
\end{equation}

From the discussion above it follows that for any $i,j \neq 0$, $\varepsilon \leq \min \{ \varepsilon_1, \varepsilon_2 \}$, and $\rho < \delta$, the functions $p_{i j}^\eps(\rho)$ and $\phi_{i j}^\eps(\rho)$ are arbitrarily many times differentiable with respect to $\rho$. Moreover, the derivative of order $r$ for $p_{i j}^\eps(\rho)$ and $\phi_{i j}^\eps(\rho)$ are given by $p_{i j}^\eps(\rho,r)$ and $\phi_{i j}^\eps(\rho,r)$, respectively.

Differentiating both sides of relation \eqref{eq:equationsphi0} gives the following for all $\varepsilon \leq \min \{ \varepsilon_1, \varepsilon_2 \}$ and $\rho < \delta$:
\begin{equation} \label{eq:equationsphir1}
 \phi_{i j}^\eps(\rho,r) = \lambda_{i j}^\eps(\rho,r) + \sum_{l \neq 0,j} p_{i l}^\eps(\rho) \phi_{l j}^\eps(\rho,r), \ i,j \neq 0, \ r=1,2,\ldots,
\end{equation}
where
\begin{equation} \label{eq:equationsphir2}
 \lambda_{i j}^\eps(\rho,r) = p_{i j}^\eps(\rho,r) + \sum_{m=1}^r \binom{r}{m} \sum_{l \neq 0,j} p_{i l}^\eps(\rho,m) \phi_{l j}^\eps(\rho,r-m).
\end{equation}

Let us rewrite relations \eqref{eq:equationsphi0}, \eqref{eq:equationsphir1}, and \eqref{eq:equationsphir2} in matrix notation. For each $j \neq 0$, we define column vectors
\begin{equation} \label{eq:vectorphi}
 \mathsf{\Phi}_j^\eps(\rho,r) = \begin{bmatrix} \phi_{1 j}^\eps(\rho,r) & \phi_{2 j}^\eps(\rho,r) & \cdots & \phi_{N j}^\eps(\rho,r) \end{bmatrix}^T,
\end{equation}
\begin{equation} \label{eq:vectorlambda}
 \boldsymbol{\lambda}_j^\eps(\rho,r) = \begin{bmatrix} \lambda_{1 j}^\eps(\rho,r) & \lambda_{2 j}^\eps(\rho,r) & \cdots & \lambda_{N j}^\eps(\rho,r) \end{bmatrix}^T,
\end{equation}
\begin{equation} \label{eq:vectorp}
 \mathbf{p}_j^\eps(\rho,r) = \begin{bmatrix} p_{1 j}^\eps(\rho,r) & p_{2 j}^\eps(\rho,r) & \cdots & p_{N j}^\eps(\rho,r) \end{bmatrix}^T,
\end{equation}
and $N \times N$ matrices $\jP^\eps(\rho,r) = \| \jp_{i k}^\eps(\rho,r) \|$ where the elements are given by
\begin{equation} \label{eq:matrixjP}
 \jp_{i k}^\eps(\rho,r) = \left\{
 \begin{array}{l l}
  p_{i k}^\eps(\rho,r) & i=1,\ldots,N, \ k \neq j, \\
  0 & i=1,\ldots,N, \ k = j.
 \end{array}
 \right.
\end{equation}

With these definitions we have
\begin{equation} \label{eq:matricesmgf}
 \mathsf{\Phi}_j^\eps(\rho,0) = \mathsf{\Phi}_j^\eps(\rho), \ \mathbf{p}_j^\eps(\rho,0) = \mathbf{p}_j^\eps(\rho), \ \jP^\eps(\rho,0) = \jP^\eps(\rho).
\end{equation}

Using \eqref{eq:equationsphi0}--\eqref{eq:matricesmgf}, we get for $r = 0$,
\begin{equation} \label{eq:systemphi0}
 \mathsf{\Phi}_j^\eps(\rho) = \mathbf{p}_j^\eps(\rho) + \jP^\eps(\rho) \mathsf{\Phi}_j^\eps(\rho), \ j \neq 0,
\end{equation}
and for $r=1,2,\ldots,$
\begin{equation} \label{eq:systemphir1}
 \mathsf{\Phi}_j^\eps(\rho,r) = \boldsymbol{\lambda}_j^\eps(\rho,r) + \jP^\eps(\rho) \mathsf{\Phi}_j^\eps(\rho,r), \ j \neq 0,
\end{equation}
where
\begin{equation} \label{eq:systemphir2}
 \boldsymbol{\lambda}_j^\eps(\rho,r) = \mathbf{p}_j^\eps(\rho,r) + \sum_{m=1}^r \binom{r}{m} \jP^\eps(\rho,m) \mathsf{\Phi}_j^\eps(\rho,r-m).
\end{equation}

Relations \eqref{eq:systemphi0}, \eqref{eq:systemphir1}, and \eqref{eq:systemphir2} allows us to calculate mixed power-exponential moments of first hitting times for a fixed (sufficiently small) value of $\varepsilon$. In order to construct asymptotic expansions for these moments, we will use properties of asymptotic matrix expansions, which will be presented now.

Let $\mathbf{A}(\varepsilon)$ be an $m \times n$ matrix valued function. Suppose that $\mathbf{A}(\varepsilon)$ on the interval $0 < \varepsilon \leq \varepsilon_0$ can be represented as
\begin{equation*}
 \mathbf{A}(\varepsilon) = \mathbf{A}_0 + \mathbf{A}_1 \varepsilon + \cdots + \mathbf{A}_k \varepsilon^k + \mathbf{o}(\varepsilon^k),
\end{equation*}
where $\mathbf{A}_0,\ldots,\mathbf{A}_k$ are $m \times n$ matrices with real-valued elements and $\mathbf{o}(\varepsilon^k)$ is an $m \times n$ matrix where all elements are of order $o(\varepsilon^k)$. Then we say that $\mathbf{A}(\varepsilon)$ has an expansion of order $k$.

The following lemma collects some properties for asymptotic matrix expansions that will be used. These properties are known, but we give a short proof in order to make the paper more self-contained.

\begin{lemma} \label{lmm:matrixexp}
Let $\mathbf{A}(\varepsilon)$ be an $m \times n$ matrix valued function which has an expansion of order $k$, and let $\mathbf{B}(\varepsilon)$ be a $p \times q$ matrix valued function which has an expansion of order $l$.
\begin{enumerate}
 \item[$\mathbf{(i)}$] If $c$ is a real-valued constant, then $\mathbf{C}(\varepsilon) = c \mathbf{A}(\varepsilon)$ has an expansion of order $k$ and the coefficients are given by
 \begin{equation*}
  \mathbf{C}_i = c \mathbf{A}_i, \ i=0,1,\ldots,k.
 \end{equation*}
 \item[$\mathbf{(ii)}$] If $m = p$ and $n = q$, then $\mathbf{C}(\varepsilon) = \mathbf{A}(\varepsilon) + \mathbf{B}(\varepsilon)$ has an expansion of order $k \wedge l$ and the coefficients are given by
 \begin{equation*}
  \mathbf{C}_i = \mathbf{A}_i + \mathbf{B}_i, \ i=0,1,\ldots,k \wedge l.
 \end{equation*}
 \item[$\mathbf{(iii)}$] If $n = p$, then $\mathbf{C}(\varepsilon) = \mathbf{A}(\varepsilon) \mathbf{B}(\varepsilon)$ has an expansion of order $k \wedge l$ and the coefficients are given by
 \begin{equation*}
  \mathbf{C}_i = \sum_{j=0}^i \mathbf{A}_j \mathbf{B}_{i-j}, \ i=0,1,\ldots,k \wedge l.
 \end{equation*}
 \item[$\mathbf{(iv)}$] If $m = n$ and $\det( \mathbf{I} - \mathbf{A}_0 ) \neq 0$, then the inverse matrix $\mathbf{C}(\varepsilon) = (\mathbf{I} - \mathbf{A}(\varepsilon))^{-1}$ exists for sufficiently small $\varepsilon$ and has an expansion of order $k$ where the coefficients are given by
 \begin{equation*}
  \mathbf{C}_0 = (\mathbf{I} - \mathbf{A}_0)^{-1} \quad \text{and} \quad \mathbf{C}_i = \mathbf{C}_0 \sum_{j=1}^i \mathbf{A}_j \mathbf{C}_{i-j}, \ i=1,\ldots,k.
 \end{equation*}
\end{enumerate}
\end{lemma}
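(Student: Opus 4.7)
The plan is to treat the four parts in order of increasing difficulty, with parts \textbf{(i)} and \textbf{(ii)} being essentially immediate, part \textbf{(iii)} requiring a careful bookkeeping of cross terms, and part \textbf{(iv)} built on top of \textbf{(iii)} via coefficient matching after establishing invertibility.

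For part \textbf{(i)}, I would simply multiply the expansion of $\mathbf{A}(\varepsilon)$ by the scalar $c$ term by term, noting that $c \cdot \mathbf{o}(\varepsilon^k) = \mathbf{o}(\varepsilon^k)$ when $c$ is fixed. For part \textbf{(ii)}, I would add the two expansions, truncate at $\varepsilon^{k \wedge l}$, and use that the sum of two $\mathbf{o}(\varepsilon^{k \wedge l})$ matrices is $\mathbf{o}(\varepsilon^{k \wedge l})$. Both parts are just a routine check that the definition of ``expansion of order $k$'' is preserved.

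For part \textbf{(iii)}, I would write $\mathbf{A}(\varepsilon) = \sum_{j=0}^k \mathbf{A}_j \varepsilon^j + \mathbf{o}(\varepsilon^k)$ and $\mathbf{B}(\varepsilon) = \sum_{i=0}^l \mathbf{B}_i \varepsilon^i + \mathbf{o}(\varepsilon^l)$, and expand the product. Collecting powers of $\varepsilon$ up to order $k \wedge l$ produces the convolution coefficient $\mathbf{C}_i = \sum_{j=0}^i \mathbf{A}_j \mathbf{B}_{i-j}$; all higher-order polynomial terms and all cross terms involving $\mathbf{o}(\varepsilon^k)$ or $\mathbf{o}(\varepsilon^l)$ collapse into $\mathbf{o}(\varepsilon^{k \wedge l})$, using that the polynomial parts of $\mathbf{A}$ and $\mathbf{B}$ are bounded on a neighborhood of $0$. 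The only ``obstacle'' here is keeping the indices straight; the argument is purely algebraic.

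Part \textbf{(iv)} is the part that requires the most care. First I would establish that $\mathbf{C}(\varepsilon) = (\mathbf{I} - \mathbf{A}(\varepsilon))^{-1}$ actually exists for all sufficiently small $\varepsilon$: since the entries of $\mathbf{A}(\varepsilon)$ are continuous at $\varepsilon = 0$ with limit $\mathbf{A}_0$, the map $\varepsilon \mapsto \det(\mathbf{I} - \mathbf{A}(\varepsilon))$ is continuous at $0$ with nonzero limit $\det(\mathbf{I} - \mathbf{A}_0) \neq 0$, so it stays nonzero on a neighborhood of $0$, and Cramer's rule then shows each entry of $\mathbf{C}(\varepsilon)$ is a continuous function of $\varepsilon$ (in fact, a rational function of the entries of $\mathbf{A}(\varepsilon)$). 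In particular $\mathbf{C}(\varepsilon) \to (\mathbf{I} - \mathbf{A}_0)^{-1}$, giving $\mathbf{C}_0$. To obtain the higher coefficients I would postulate an expansion $\mathbf{C}(\varepsilon) = \mathbf{C}_0 + \mathbf{C}_1 \varepsilon + \cdots + \mathbf{C}_k \varepsilon^k + \mathbf{o}(\varepsilon^k)$, substitute it into the identity $(\mathbf{I} - \mathbf{A}(\varepsilon)) \mathbf{C}(\varepsilon) = \mathbf{I}$, and apply part \textbf{(iii)} to the left-hand side. Matching coefficients of $\varepsilon^i$ on both sides gives $\mathbf{C}_i = \sum_{j=1}^i \mathbf{A}_j \mathbf{C}_{i-j} + \mathbf{A}_0 \mathbf{C}_i$, which rearranges to the stated recursion $\mathbf{C}_i = \mathbf{C}_0 \sum_{j=1}^i \mathbf{A}_j \mathbf{C}_{i-j}$. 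The recursion defines $\mathbf{C}_0, \mathbf{C}_1, \ldots, \mathbf{C}_k$ uniquely; to close the argument, I would verify inductively that, with these specific matrices as candidates, the difference $\mathbf{C}(\varepsilon) - \sum_{i=0}^k \mathbf{C}_i \varepsilon^i$ is indeed $\mathbf{o}(\varepsilon^k)$, which follows by multiplying through by $\mathbf{I} - \mathbf{A}(\varepsilon)$ (whose inverse has bounded norm near $0$) and using the order-$k$ expansion on the right-hand side. The main technical obstacle is precisely this last verification: the recursion is forced by formal coefficient matching, but one still needs that the true inverse $\mathbf{C}(\varepsilon)$ admits an asymptotic expansion of the claimed order, which is handled by the boundedness of $(\mathbf{I} - \mathbf{A}(\varepsilon))^{-1}$ near $\varepsilon = 0$.
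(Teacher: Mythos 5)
Your proposal is correct and follows essentially the same route as the paper: parts \textbf{(i)}--\textbf{(iii)} by direct term-by-term algebra, and part \textbf{(iv)} by first establishing invertibility and continuity of $(\mathbf{I}-\mathbf{A}(\varepsilon))^{-1}$ via the determinant and Cramer's rule, then extracting the coefficients from the identity $(\mathbf{I}-\mathbf{A}(\varepsilon))\mathbf{C}(\varepsilon)=\mathbf{I}$. The only organizational difference is in how the higher coefficients of \textbf{(iv)} are justified: the paper peels them off one order at a time (showing $\mathbf{M}_0(\varepsilon)/\varepsilon \rightarrow \mathbf{C}_0\mathbf{A}_1\mathbf{C}_0$ for $k=1$ and proceeding by induction), whereas you define all $\mathbf{C}_i$ at once by the recursion and then verify the full remainder bound in a single step by multiplying the residual through by $\mathbf{I}-\mathbf{A}(\varepsilon)$ and using that the inverse stays bounded near $\varepsilon=0$. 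Your one-shot verification is a clean and complete alternative to the paper's induction; both hinge on the same two facts, namely $\det(\mathbf{I}-\mathbf{A}_0)\neq 0$ and the algebra of part \textbf{(iii)}.
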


\begin{proof}
Parts $\mathbf{(i)}$, $\mathbf{(ii)}$, and $\mathbf{(iii)}$ are consequences of elementary algebraic relations.

For the proof of part $\mathbf{(iv)}$ we first note that since $(\mathbf{I} - \mathbf{A}(\varepsilon)) \rightarrow (\mathbf{I} - \mathbf{A}_0)$ as $\varepsilon \rightarrow 0$, and $\det( \mathbf{I} - \mathbf{A}_0 ) \neq 0$, it follows that $\det( \mathbf{I} - \mathbf{A}(\varepsilon) ) \neq 0$ for sufficiently small $\varepsilon$. Thus, the matrix $\mathbf{I} - \mathbf{A}(\varepsilon)$ has an inverse for sufficiently small $\varepsilon$. Furthermore, the elements of this inverse matrix are rational functions of the elements of $\mathbf{A}(\varepsilon)$. From this it follows that $( \mathbf{I} - \mathbf{A}(\varepsilon) )^{-1} \rightarrow ( \mathbf{I} - \mathbf{A}_0 )^{-1}$, so we have the representation
\begin{equation} \label{eq:expC0}
 \mathbf{C}(\varepsilon) = \mathbf{C}_0 + \mathbf{M}_0(\varepsilon),
\end{equation}
where $\mathbf{C}_0 = ( \mathbf{I} - \mathbf{A}_0 )^{-1}$ and $\mathbf{M}_0(\varepsilon) \rightarrow \mathbf{0}$ as $\varepsilon \rightarrow 0$.

Now assume that $k = 1$. Then, using \eqref{eq:expC0},
\begin{equation*}
\begin{split}
 \mathbf{I} &= ( \mathbf{I} - \mathbf{A}(\varepsilon) ) ( \mathbf{I} - \mathbf{A}(\varepsilon) )^{-1} \\
 &= ( \mathbf{I} - \mathbf{A}_0 - \mathbf{A}_1 \varepsilon + \mathbf{o}(\varepsilon) ) ( \mathbf{C}_0 + \mathbf{M}_0(\varepsilon) ) \\
 &= \mathbf{I} + ( \mathbf{I} - \mathbf{A}_0 ) \mathbf{M}_0(\varepsilon) - ( \mathbf{A}_1 \varepsilon + \mathbf{o}(\varepsilon) ) \mathbf{C}_0 + \mathbf{o}(\varepsilon).
\end{split}
\end{equation*}
Rewriting this relation and dividing by $\varepsilon > 0$, we get
\begin{equation*}
 \frac{\mathbf{M}_0(\varepsilon)}{\varepsilon} = (\mathbf{I} - \mathbf{A}_0)^{-1} \left( \mathbf{A}_1 + \frac{\mathbf{o}(\varepsilon)}{\varepsilon} \right) \mathbf{C}_0 + \frac{\mathbf{o}(\varepsilon)}{\varepsilon}.
\end{equation*}
Letting $\varepsilon$ tend to zero it follows that $\mathbf{M}_0(\varepsilon) / \varepsilon \rightarrow \mathbf{C}_0 \mathbf{A}_1 \mathbf{C}_0$ as $\varepsilon \rightarrow 0$. From this and relation \eqref{eq:expC0} we get the representation
\begin{equation*}
 \mathbf{C}(\varepsilon) = \mathbf{C}_0 + \mathbf{C}_1 \varepsilon + \mathbf{M}_1(\varepsilon),
\end{equation*}
where $\mathbf{C}_0 = ( \mathbf{I} - \mathbf{A}_0 )^{-1}$, $\mathbf{C}_1 = \mathbf{C}_0 \mathbf{A}_1 \mathbf{C}_0$ and $\mathbf{M}_1(\varepsilon) / \varepsilon \rightarrow \mathbf{0}$ as $\varepsilon \rightarrow 0$.

This proves part $\mathbf{(iv)}$ for $k = 1$.

For a general $k$ we can prove the result by induction using the same technique as above.
\end{proof}

We will now use the results above to show how mixed power-exponential moments of first hitting times can be expanded in a power series with respect to the perturbation parameter and how the coefficients can be calculated explicitly.

Let us introduce the following perturbation condition which is assumed to hold for some $\rho < \delta$, where $\delta$ is the number from Lemma \ref{lmm:mgfproperties}:
\begin{enumerate}
 \item[$\Cperturbationmom$:] $p_{i j}^\eps(\rho,r) = p_{i j}^\zero(\rho,r) + p_{i j}[\rho,r,1] \varepsilon + \cdots + p_{i j}[\rho,r,k-r] \varepsilon^{k-r} + o(\varepsilon^{k-r})$, $r=0,\ldots,k$, $i,j \neq 0$, where $|p_{i j}[\rho,r,n]| < \infty$, $r=0,\ldots,k$, $n=1,\ldots,k-r$, $i,j \neq 0$.
\end{enumerate}
For convenience we denote $p_{i j}[\rho,r,0] = p_{i j}^\zero(\rho,r)$, for $r = 0,\ldots,k$.

To prepare for the next result, note that it follows from condition $\Cperturbationmom$ that the vectors $\mathbf{p}_j^\eps(\rho,r)$ and matrices $\jP^\eps(\rho,r)$, defined by relations \eqref{eq:vectorp} and \eqref{eq:matrixjP}, respectively, have asymptotic expansions
\begin{equation*}
 \mathbf{p}_j^\eps(\rho,r) = \mathbf{p}_j^\zero(\rho,r) + \mathbf{p}_j[\rho,r,1] \varepsilon + \cdots + \mathbf{p}_j[\rho,r,k-r] \varepsilon^{k-r} + \mathbf{o}(\varepsilon^{k-r}),
\end{equation*}
and
\begin{equation*}
 \jP^\eps(\rho,r) = \jP^\zero(\rho,r) + \jP[\rho,r,1] \varepsilon + \cdots + \jP[\rho,r,k-r] \varepsilon^{k-r} + \mathbf{o}(\varepsilon^{k-r}),
\end{equation*}
where the vector coefficients $\mathbf{p}_j[\rho,r,n]$ are given by
\begin{equation*}
 \mathbf{p}_j[\rho,r,n] = \begin{bmatrix} p_{1 j}[\rho,r,n] & p_{2 j}[\rho,r,n] & \cdots & p_{N j}[\rho,r,n] \end{bmatrix}^T,
\end{equation*}
and the coefficients $\jP[\rho,r,n] = \| \jp_{i k}[\rho,r,n] \|$ are $N \times N$ matrices where the elements are given by
\begin{equation*}
 \jp_{i k}[\rho,r,n] = \left\{
 \begin{array}{l l}
  p_{i k}[\rho,r,n] & i=1,\ldots,N, \ k \neq j, \\
  0 & i=1,\ldots,N, \ k = j.
 \end{array}
 \right.
\end{equation*}

The following theorem is an essential tool for the proof of the main result of the present paper.

\begin{theorem} \label{thm:mixedmoments}
Assume that conditions $\Ccontinuity$, $\Cergodicity$, $\Cmgf$, and $\Cperturbationmom$ hold and fix some $j \neq 0$. Then:
\begin{enumerate}
 \item[$\mathbf{(i)}$] The inverse matrix $\jU^\eps(\rho) = (\mathbf{I} - \jP^\eps(\rho))^{-1}$ exists for sufficiently small $\varepsilon$ and has the expansion
 \begin{equation*}
  \jU^\eps(\rho) = \jU[\rho,0] + \jU[\rho,1] \varepsilon + \cdots + \jU[\rho,k] + \mathbf{o}(\varepsilon^k),
 \end{equation*}
 where
 \begin{equation*}
  \jU[\rho,n] = \left\{
  \begin{array}{l l}
   (\mathbf{I} - \jP^\zero(\rho))^{-1} & n=0, \\
   \jU[\rho,0] \sum_{q=1}^n \jP[\rho,0,q] \jU[\rho,n-q] & n=1,\ldots,k.
  \end{array}
  \right.
 \end{equation*}
 \item[$\mathbf{(ii)}$] We have the expansion
 \begin{equation*}
  \mathsf{\Phi}_j^\eps(\rho) = \mathsf{\Phi}_j[\rho,0,0] + \mathsf{\Phi}_j[\rho,0,1] \varepsilon + \cdots + \mathsf{\Phi}_j[\rho,0,k] \varepsilon^k + \mathbf{o}(\varepsilon^k),
 \end{equation*}
 where
 \begin{equation} \label{eq:phi0coeff}
  \mathsf{\Phi}_j[\rho,0,n] = \left\{
  \begin{array}{l l}
   \mathsf{\Phi}_j^\zero(\rho) & n=0, \\
   \sum_{q=0}^n \jU[\rho,q] \mathbf{p}_j[\rho,0,n-q] & n=1,\ldots,k.
  \end{array}
  \right.
 \end{equation}
 \item[$\mathbf{(iii)}$] For $r=1,\ldots,k,$ we have the expansion
 \begin{equation*}
  \mathsf{\Phi}_j^\eps(\rho,r) = \mathsf{\Phi}_j[\rho,r,0] + \mathsf{\Phi}_j[\rho,r,1] \varepsilon + \cdots + \mathsf{\Phi}_j[\rho,r,k-r] \varepsilon^{k-r} + \mathbf{o}(\varepsilon^{k-r}),
 \end{equation*}
 where the coefficients can be calculated recursively by the formulas
 \begin{equation*}
  \mathsf{\Phi}_j[\rho,r,n] = \left\{
  \begin{array}{l l}
   \mathsf{\Phi}_j^\zero(\rho,r) & n=0, \\
   \sum_{q=0}^n \jU[\rho,q] \boldsymbol{\lambda}_j[\rho,r,n-q], & n=1,\ldots,k-r,
  \end{array}
  \right.
 \end{equation*}
 where, for $s=0,\ldots,k-r$,
 \begin{equation*}
  \boldsymbol{\lambda}_j[\rho,r,s] = \mathbf{p}_j[\rho,r,s] + \sum_{m=1}^r \binom{r}{m} \sum_{q=0}^s \jP[\rho,m,q] \mathsf{\Phi}_j[\rho,r-m,s-q].
 \end{equation*}
\end{enumerate}
\end{theorem}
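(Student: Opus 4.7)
The plan is to reduce parts \textbf{(ii)} and \textbf{(iii)} to the matrix identities
\begin{equation*}
 \mathsf{\Phi}_j^\eps(\rho) = \jU^\eps(\rho) \mathbf{p}_j^\eps(\rho), \qquad \mathsf{\Phi}_j^\eps(\rho,r) = \jU^\eps(\rho) \boldsymbol{\lambda}_j^\eps(\rho,r),
\end{equation*}
which are immediate consequences of \eqref{eq:systemphi0} and \eqref{eq:systemphir1} once the invertibility of $\mathbf{I} - \jP^\eps(\rho)$ is established, and then to multiply out the expansions using Lemma \ref{lmm:matrixexp}. I would proceed in the order \textbf{(i)}, \textbf{(ii)}, \textbf{(iii)}, inducting on $r$ in part \textbf{(iii)}.

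For part \textbf{(i)}, the decisive input is $\det( \mathbf{I} - \jP^\zero(\rho) ) \neq 0$. This follows because Lemma \ref{lmm:mgfproperties}\textbf{(iii)} gives $\phi_{k j}^\zero(\delta) < \infty$ for all $k, j \neq 0$; monotonicity of moment generating functions then gives $\mathsf{\Phi}_j^\zero(\rho) < \infty$ since $\rho < \delta$, and Lemma \ref{lmm:finitemgfs} (applied at $\varepsilon = 0$) supplies the required invertibility. Condition $\Cperturbationmom$ at $r = 0$ provides an order-$k$ expansion of $\jP^\eps(\rho)$ with the matrix coefficients $\jP[\rho,0,n]$, and Lemma \ref{lmm:matrixexp}\textbf{(iv)} then produces the expansion of $\jU^\eps(\rho)$ with exactly the stated recursion. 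Part \textbf{(ii)} follows almost immediately: both factors in the product $\jU^\eps(\rho) \mathbf{p}_j^\eps(\rho)$ have order-$k$ expansions (the second by $\Cperturbationmom$ at $r = 0$), so Lemma \ref{lmm:matrixexp}\textbf{(iii)} yields the Cauchy product \eqref{eq:phi0coeff}.

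Part \textbf{(iii)} I would prove by induction on $r$, with $r = 0$ already covered by part \textbf{(ii)}. For the inductive step I combine the representation $\mathsf{\Phi}_j^\eps(\rho,r) = \jU^\eps(\rho) \boldsymbol{\lambda}_j^\eps(\rho,r)$ with the defining formula \eqref{eq:systemphir2}. The term $\mathbf{p}_j^\eps(\rho,r)$ has an expansion of order $k - r$ by $\Cperturbationmom$. In each product $\jP^\eps(\rho,m) \mathsf{\Phi}_j^\eps(\rho,r-m)$ with $1 \leq m \leq r$, the first factor has order $k - m$ by $\Cperturbationmom$ and the second has order $k - r + m$ by the inductive hypothesis, so Lemma \ref{lmm:matrixexp}\textbf{(iii)} gives a product of order $\min(k-m, k-r+m) \geq k - r$. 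Summing via Lemma \ref{lmm:matrixexp}\textbf{(i)}--\textbf{(ii)} shows that $\boldsymbol{\lambda}_j^\eps(\rho,r)$ has an order-$(k-r)$ expansion with coefficients precisely the $\boldsymbol{\lambda}_j[\rho,r,s]$ displayed in the theorem. One last application of Lemma \ref{lmm:matrixexp}\textbf{(iii)} to $\jU^\eps(\rho) \boldsymbol{\lambda}_j^\eps(\rho,r)$ then produces the recursion for $\mathsf{\Phi}_j[\rho,r,n]$.

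The argument is essentially mechanical once Lemma \ref{lmm:matrixexp} is in hand; the only delicate point is the bookkeeping at the inductive step, where one must verify that no product appearing in $\boldsymbol{\lambda}_j^\eps(\rho,r)$ degrades the expansion order below $k - r$. This is exactly what the range $r = 0, \ldots, k$ in the hypothesis of $\Cperturbationmom$ is designed to guarantee.
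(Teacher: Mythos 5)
Your proposal is correct and follows essentially the same route as the paper: establish $\det(\mathbf{I}-\jP^\zero(\rho))\neq 0$ via Lemma \ref{lmm:mgfproperties} and Lemma \ref{lmm:finitemgfs}, invoke Lemma \ref{lmm:matrixexp}$\mathbf{(iv)}$ for part $\mathbf{(i)}$, solve the linear systems \eqref{eq:systemphi0} and \eqref{eq:systemphir1} explicitly, and multiply out expansions by induction on $r$. The only cosmetic difference is that the paper works the case $r=1$ separately before the general inductive step, whereas you (correctly, and with slightly more explicit order bookkeeping for the products $\jP^\eps(\rho,m)\mathsf{\Phi}_j^\eps(\rho,r-m)$) go straight to the induction.
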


\begin{proof}
First note that under conditions $\Ccontinuity$, $\Cergodicity$, and $\Cmgf$, it follows from part $\mathbf{(iii)}$ of Lemma \ref{lmm:mgfproperties} that $\mathsf{\Phi}_j^\zero(\rho) < \infty$, for all $\rho \leq \delta$. Thus, by applying Lemma \ref{lmm:finitemgfs} we see that the inverse matrix $(\mathbf{I} - \jP^\zero(\rho))^{-1}$ exists for all $\rho \leq \delta$. Using this and condition $\Cperturbationmom$, part $\mathbf{(i)}$ now follows from part $\mathbf{(iv)}$ of Lemma \ref{lmm:matrixexp}.

For the proof of part $\mathbf{(ii)}$ notice that it follows from Equation \eqref{eq:systemphi0} and part $\mathbf{(i)}$ that for sufficiently small $\varepsilon$ we have
\begin{equation} \label{eq:phi0solution}
 \mathsf{\Phi}_j^\eps(\rho) = (\mathbf{I} - \jP^\eps(\rho))^{-1} \mathbf{p}_j^\eps(\rho).
\end{equation}
It follows from \eqref{eq:phi0solution}, part $\mathbf{(i)}$, condition $\Cperturbationmom$, and part $\mathbf{(iii)}$ of Lemma \ref{lmm:matrixexp} that $\mathsf{\Phi}_j^\eps(\rho)$ has an expansion of order $k$ with coefficients given by Equation \eqref{eq:phi0coeff}. This proves part $\mathbf{(ii)}$.

Now we consider Equations \eqref{eq:systemphir1} and \eqref{eq:systemphir2} for $r = 1$:
\begin{equation} \label{eq:phi1expr}
 \mathsf{\Phi}_j^\eps(\rho,1) = \boldsymbol{\lambda}_j^\eps(\rho,1) + \jP^\eps(\rho) \mathsf{\Phi}_j^\eps(\rho,1),
\end{equation}
where
\begin{equation} \label{eq:lambda1expr}
 \boldsymbol{\lambda}_j^\eps(\rho,1) = \mathbf{p}_j^\eps(\rho,1) + \jP^\eps(\rho,1) \mathsf{\Phi}_j^\eps(\rho).
\end{equation}
It follows from \eqref{eq:phi1expr} and part $\mathbf{(i)}$ that for sufficiently small $\varepsilon$,
\begin{equation} \label{eq:phi1solution}
 \mathsf{\Phi}_j^\eps(\rho,1) = (\mathbf{I} - \jP^\eps(\rho))^{-1} \boldsymbol{\lambda}_j^\eps(\rho,1).
\end{equation}
It follows from \eqref{eq:lambda1expr}, part $\mathbf{(ii)}$, condition $\Cperturbationmom$, and parts $\mathbf{(ii)}$-$\mathbf{(iii)}$ of Lemma \ref{lmm:matrixexp} that
\begin{equation} \label{eq:lambda1exp}
 \boldsymbol{\lambda}_j^\eps(\rho,1) = \boldsymbol{\lambda}_j[\rho,1,0] + \boldsymbol{\lambda}_j[\rho,1,1] \varepsilon + \cdots + \boldsymbol{\lambda}_j[\rho,1,k-1] \varepsilon^{k-1} + \mathbf{o}(\varepsilon^{k-1}),
\end{equation}
where
\begin{equation} \label{eq:lambda1coeff}
 \boldsymbol{\lambda}_j[\rho,1,s] = \mathbf{p}_j[\rho,1,s] + \sum_{q=0}^s \jP[\rho,1,q] \mathsf{\Phi}_j[\rho,0,s-q], \ s=0,\ldots,k-1.
\end{equation}
It now follows from \eqref{eq:phi1solution}, \eqref{eq:lambda1exp}, \eqref{eq:lambda1coeff}, part $\mathbf{(i)}$, and part $\mathbf{(iii)}$ of Lemma \ref{lmm:matrixexp} that $\mathsf{\Phi}_j^\eps(\rho,1)$ has an expansion of order $k-1$ with coefficients given by
\begin{equation*}
 \mathsf{\Phi}_j[\rho,1,n] = \sum_{q=0}^n \jU[\rho,q] \boldsymbol{\lambda}_j[\rho,1,n-q], \ n=1,\ldots,k-1.
\end{equation*}
This proves part $\mathbf{(iii)}$ for $r = 1$.

We prove the general result by induction. Let us assume that part $\mathbf{(iii)}$ holds for $r=1,\ldots,u-1$, for some $u \leq k$. Equations \eqref{eq:systemphir1} and \eqref{eq:systemphir2} give
\begin{equation} \label{eq:phikexpr}
 \mathsf{\Phi}_j^\eps(\rho,u) = \boldsymbol{\lambda}_j^\eps(\rho,u) + \jP^\eps(\rho) \mathsf{\Phi}_j^\eps(\rho,u),
\end{equation}
where
\begin{equation} \label{eq:lambdakexpr}
 \boldsymbol{\lambda}_j^\eps(\rho,u) = \mathbf{p}_j^\eps(\rho,u) + \sum_{m=1}^u \binom{u}{m} \jP^\eps(\rho,m) \mathsf{\Phi}_j^\eps(\rho,u-m).
\end{equation}
It follows from \eqref{eq:phikexpr} and part $\mathbf{(i)}$ that for sufficiently small $\varepsilon$,
\begin{equation} \label{eq:phiksolution}
 \mathsf{\Phi}_j^\eps(\rho,u) = (\mathbf{I} - \jP^\eps(\rho))^{-1} \boldsymbol{\lambda}_j^\eps(\rho,u).
\end{equation}
It follows from \eqref{eq:lambdakexpr}, part $\mathbf{(ii)}$, condition $\Cperturbationmom$, parts $\mathbf{(i)}$-$\mathbf{(iii)}$ of Lemma \ref{lmm:matrixexp}, and the induction hypothesis that
\begin{equation} \label{eq:lambdakexp}
 \boldsymbol{\lambda}_j^\eps(\rho,u) = \boldsymbol{\lambda}_j[\rho,u,0] + \boldsymbol{\lambda}_j[\rho,u,1] \varepsilon + \cdots + \boldsymbol{\lambda}_j[\rho,u,k-u] \varepsilon^{k-u} + \mathbf{o}(\varepsilon^{k-u}),
\end{equation}
where for $s=0,\ldots,k-u$,
\begin{equation} \label{eq:lambdakcoeff}
 \boldsymbol{\lambda}_j[\rho,u,s] = \mathbf{p}_j[\rho,u,s] + \sum_{m=0}^u \binom{u}{m} \sum_{q=0}^s \jP[\rho,m,q] \mathsf{\Phi}_j[\rho,u-m,s-q].
\end{equation}
It now follows from \eqref{eq:phiksolution}, \eqref{eq:lambdakexp}, \eqref{eq:lambdakcoeff}, part $\mathbf{(i)}$, and part $\mathbf{(iii)}$ of Lemma \ref{lmm:matrixexp} that $\mathsf{\Phi}_j^\eps(\rho,u)$ has an expansion of order $k-u$ with coefficients given by
\begin{equation*}
 \mathsf{\Phi}_j[\rho,u,n] = \sum_{q=0}^n \jU[\rho,q] \boldsymbol{\lambda}_j[\rho,u,n-q], \ n=1,\ldots,k-u.
\end{equation*}
This concludes the proof of Theorem \ref{thm:mixedmoments}.
\end{proof}

\section{Solidarity Property of Periodicity} \label{sec:periodicity}

In this section we show that the periodicity of the distribution of first return time satisfies a solidarity property.

The period of $g_{i i}^\eps(n)$ is defined by
\begin{equation*}
 d_i = \gcd \{ n \in \mathbb{N} : g_{i i}^\eps(n) > 0 \}, \ i \neq 0.
\end{equation*}
In particular, $d_i = 1$ means that $g_{i i}^\eps(n)$ is non-periodic.

In order to guarantee non-periodicity of $g_{i i}^\zero(n)$, we will assume that the following condition holds:
\begin{enumerate}
 \item[$\Cnonperiodicity$:] $g_{j j}^\zero(n)$ is non-periodic for some $j \neq 0$.
\end{enumerate}

It will be shown that the function $g_{i i}^\eps(n)$ have the same period for all states $i \neq 0$. In the proof of this result we will use the convolution operator. For two real-valued functions $f(n)$, $n=0,1,\ldots,$ and $g(n)$, $n=0,1,\ldots,$ the convolution is defined by
\begin{equation*}
 f * g(n) = \sum_{k=0}^n f(n-k) g(k), \ n=0,1,\ldots
\end{equation*}
Furthermore, for a function $f(n)$, $n=0,1,\ldots,$ the $k$-fold convolution $f^{(*k)}(n)$ is defined recursively by $f^{(*0)}(n) = \chi(n=0)$ and
\begin{equation*}
 f^{(*k)}(n) = f * f^{(*(k-1))}(n), \ k=1,2,\ldots
\end{equation*}
Notice that $f^{(*1)}(n) = f(n)$.

Let us introduce the following notation:
\begin{equation*}
 \kg_{i j}^\eps(n) = \mathsf{P}_i \{ \mu_j = n, \ \nu_0^\eps \wedge \nu_k^\eps > \nu_j^\eps \}, \ n=0,1,\ldots, \ i,j,k \in X.
\end{equation*}

In the proof of the following lemma we adopt a technique that is used in the proof of a similar result for continuous time semi-Markov processes given in {\c C}inlar (1974).

\begin{lemma} \label{lmm:solperiod}
If we for some $\varepsilon \geq 0$ have $g_{i j}^\eps > 0$ for all $i,j \neq 0$, then $d_i = d_j$ for all $i,j \neq 0$.
\end{lemma}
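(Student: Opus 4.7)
The approach adapts the classical period-solidarity argument for irreducible Markov chains to the first-passage semi-Markov setting. The crucial preliminary observation is that
\[
\gcd\{ N \geq 1 : \text{some sample path from } i \text{ returns to } i \text{ at real time } N \text{ while avoiding } 0 \} \; = \; d_i,
\]
because any such return path splits at its intermediate visits to $i$ into consecutive first-return segments, each with real-time length in $\{m \geq 1 : g_{ii}^\eps(m) > 0\}$ and therefore divisible by $d_i$; conversely any sum of such first-return lengths is realized by concatenating the corresponding first-return sample paths via the strong Markov property of $(\eta_n^\eps,\kappa_n^\eps)$.

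Fix $i \neq j$ in $\{1,\ldots,N\}$ (the case $i = j$ is trivial). Since $g_{ij}^\eps > 0$ and $g_{ji}^\eps > 0$ there exist $m_1, m_2 \geq 1$ with $g_{ij}^\eps(m_1) > 0$ and $g_{ji}^\eps(m_2) > 0$. For every $n \geq 1$ with $g_{jj}^\eps(n) > 0$, the strong Markov property at the successive first-passage times yields
\[
g_{ij}^\eps(m_1) \, g_{ji}^\eps(m_2) \; \leq \; (g_{ij}^\eps * g_{ji}^\eps)(m_1 + m_2),
\]
\[
g_{ij}^\eps(m_1) \, g_{jj}^\eps(n) \, g_{ji}^\eps(m_2) \; \leq \; (g_{ij}^\eps * g_{jj}^\eps * g_{ji}^\eps)(m_1 + n + m_2),
\]
both strictly positive and each realized by a concrete sample path returning from $i$ to $i$ at the stated real time while avoiding $0$. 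The preliminary observation then gives $d_i \mid (m_1 + m_2)$ and $d_i \mid (m_1 + n + m_2)$, whence $d_i \mid n$. Since $n$ was an arbitrary point of the support of $g_{jj}^\eps$, this yields $d_i \mid d_j$, and the symmetric argument gives $d_j \mid d_i$, so $d_i = d_j$.

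The substantive step is justifying the decomposition identity in the first paragraph, i.e.\ that any positive-probability return path from $i$ to $i$ breaks at its intermediate visits to $i$ into segments each of positive first-return probability $g_{ii}^\eps(\cdot)$ and with real-time lengths summing to the total. This is the strong Markov property of the embedded chain applied at the successive return times, together with the fact that real times add along concatenated sample paths. Once this is in place the rest is a divisibility calculation using only the convolution operator already introduced in this section, and no machinery beyond Lemma \ref{lmm:finitemgfs} is required.
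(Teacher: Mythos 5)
Your proof is correct, but it takes a genuinely different route from the paper's. The paper adapts {\c C}inlar's technique: it writes down the four regenerative convolution identities \eqref{eq:solperiod1}--\eqref{eq:solperiod4}, iterates two of them to expand $g_{i i}^\eps(n)$ and $g_{j j}^\eps(n)$ as sums of taboo-probability convolutions plus a remainder, reads off from the first expansion that each summand --- in particular $\ig_{j j}^\eps(n)$ --- is concentrated on $d_i \mathbb{N}$, and then lets $m \rightarrow \infty$ in the second expansion to kill the remainder term and conclude that $g_{j j}^\eps(n)$ itself is concentrated on $d_i \mathbb{N}$. You instead run the classical period-solidarity argument on the real-time clock: $d_i$ divides the length of every (not necessarily first) return to $i$ that avoids $0$, because such a path decomposes at its intermediate visits to $i$ into first-return segments; the round trips $i \to j \to i$ and $i \to j \to j \to i$ then give $d_i \mid (m_1 + m_2)$ and $d_i \mid (m_1 + n + m_2)$, hence $d_i \mid n$ for every $n$ in the support of $g_{j j}^\eps$, so $d_i \mid d_j$, and symmetry finishes. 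Your route is more elementary --- it replaces the iteration and the limiting argument by a single divisibility subtraction --- at the price of having to justify the decomposition of an arbitrary return into first-return segments, which is exactly the strong Markov property of $(\eta_n^\eps,\kappa_n^\eps)$ that you invoke; the paper's convolution identities encode the same regenerative splitting but only ever at \emph{first} passages, so no such auxiliary observation is needed there. Both arguments use the hypothesis $g_{i j}^\eps, g_{j i}^\eps > 0$ in the same place, namely to produce the connecting passages of lengths $m_1$ and $m_2$. One small point to make explicit in a final write-up: ``returns to $i$ at real time $N$'' must mean that the embedded chain is at $i$ at a jump moment with $\tau^\eps(\cdot) = N$, since the semi-Markov process can sit in state $i$ between jumps; with that reading your decomposition and concatenation steps are exactly right, and the final assertion that no machinery beyond this section is needed is accurate (Lemma \ref{lmm:finitemgfs} is in fact not used at all).
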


\begin{proof}
Choose $i,j \neq 0$ arbitrarily. The conclusion is trivial if $i = j$ so let us assume that $i \neq j$.

By using the regenerative property of the semi-Markov process we can for all $n=0,1,\ldots,$ write down the following relations:
\begin{equation} \label{eq:solperiod1}
 g_{i i}^\eps(n) = \jg_{i i}^\eps(n) + \ig_{i j}^\eps * g_{j i}^\eps(n),
\end{equation}
\begin{equation} \label{eq:solperiod2}
 g_{j i}^\eps(n) = \jg_{j i}^\eps(n) + \ig_{j j}^\eps * g_{j i}^\eps(n),
\end{equation}
\begin{equation} \label{eq:solperiod3}
 g_{j j}^\eps(n) = \ig_{j j}^\eps(n) + \jg_{j i}^\eps * g_{i j}^\eps(n),
\end{equation}
\begin{equation} \label{eq:solperiod4}
 g_{i j}^\eps(n) = \ig_{i j}^\eps(n) + \jg_{i i}^\eps * g_{i j}^\eps(n).
\end{equation}
Iterating Equation \eqref{eq:solperiod2} and using \eqref{eq:solperiod1} we get
\begin{equation} \label{eq:solperiod5}
\begin{split}
 g_{i i}^\eps(n) &= \jg_{i i}^\eps(n) + \sum_{k=0}^m \ig_{i j}^\eps * (\ig_{j j}^\eps)^{(*k)} * \jg_{j i}^\eps (n) \\
 &\quad \quad + \ig_{i j}^\eps * (\ig_{j j}^\eps)^{(*(m+1))} * g_{j i}^\eps (n), \ m=0,1,\ldots
\end{split}
\end{equation}
Similarly, by using Equations \eqref{eq:solperiod3} and \eqref{eq:solperiod4} we get
\begin{equation} \label{eq:solperiod6}
\begin{split}
 g_{j j}^\eps(n) &= \ig_{j j}^\eps(n) + \sum_{k=0}^m \jg_{j i}^\eps * (\jg_{i i}^\eps)^{(*k)} * \ig_{i j}^\eps (n) \\
 &\quad \quad + \jg_{j i}^\eps * (\jg_{i i}^\eps)^{(*(m+1))} * g_{i j}^\eps (n), \ m=0,1,\ldots
\end{split}
\end{equation}

Since $g_{i i}^\eps(n)$ has period $d_i$, it has all its mass concentrated on the set $d_i \mathbb{N} = \{d_i,2 d_i,\ldots \}$. It follows from \eqref{eq:solperiod5} with $m = 0$ that the functions $\jg_{i i}^\eps(n)$, $\ig_{i j}^\eps * \jg_{j i}^\eps (n)$ and $\ig_{i j}^\eps * \ig_{j j}^\eps * \jg_{j i}^\eps (n)$ are all concentrated on the set $d_i \mathbb{N}$. Since $\ig_{i j}^\eps * \jg_{j i}^\eps (n)$ is not identically equal to zero, it also follows from \eqref{eq:solperiod5} that $\ig_{j j}^\eps(n)$ concentrates on $d_i \mathbb{N}$. It can now be concluded that all functions on the right hand side of \eqref{eq:solperiod6}, except for possibly the last one, is concentrated on $d_i \mathbb{N}$. Using this, and that $g_{j j}^\eps(n)$ is the limit of the right hand side of \eqref{eq:solperiod6} as $m \rightarrow \infty$, we have for any $n' \notin d_i \mathbb{N}$,
\begin{equation*}
 g_{j j}^\eps(n') = \lim_{m \rightarrow \infty} \jg_{j i}^\eps * (\jg_{i i}^\eps)^{(*(m+1))} * g_{i j}^\eps (n') = 0.
\end{equation*}
This means that $g_{j j}^\eps(n)$ is concentrated on the set $d_i \mathbb{N}$ and we can conclude that $d_j \geq d_i$. By using analogous arguments as above, \eqref{eq:solperiod5} and \eqref{eq:solperiod6} can also be used to show that $d_i \geq d_j$. In conclusion, $d_i = d_j$.
\end{proof}

\section{Exponential Expansions for Perturbed \\ Semi-Markov Processes} \label{sec:mainresult}

In this section we give asymptotic exponential expansions for perturbed discrete time semi-Markov processes with absorption. The results are obtained by applying corresponding results for perturbed regenerative processes given in Section \ref{sec:regenerative}.

Our main objective is to give a detailed asymptotic analysis of the probabilities
\begin{equation*}
 P_{i j}^\eps(n) = \mathsf{P}_i \{ \xi^\eps(n) = j, \ \mu_0^\eps > n \}, \ n=0,1,\ldots, \ i,j \neq 0,
\end{equation*}
as $n \rightarrow \infty$ and $\varepsilon \rightarrow 0$.

Let us assume that the initial distribution of the semi-Markov process $\xi^\eps(n)$ is concentrated at some state $i \neq 0$. Then $\xi^\eps(n)$ is a regenerative process with regeneration times being successive return times to state $i$. If state $0$ is an absorbing state, these regeneration times are possibly improper random variables. In Section \ref{sec:regenerative} it was assumed that the regeneration times were proper random variables. However, the probabilities $P_{i j}^\eps(n)$, $i,j \neq 0$, do not depend on the transition probabilities from state $0$. This means that we can modify these transition probabilities without affecting the probabilities $P_{i j}^\eps(n)$, $i,j \neq 0$. For example, if we take $Q_{i j}^\eps(n) = \chi( n = 1 )/(N+1)$, then return times to any fixed initial state $i \neq 0$ can serve as proper regeneration times. We can apply the results of Section \ref{sec:regenerative} to this modified process and then it follows that the results also hold for the process where $0$ is an absorbing state.

By using the regenerative property of the semi-Markov process at return times to the initial state, we can for any $i,j \neq 0$ write the following renewal equation:
\begin{equation*}
 P_{i j}^\eps(n) = h_{i j}^\eps(n) + \sum_{k=0}^n P_{i j}^\eps(n-k) g_{i i}^\eps(k), \ n=0,1,\ldots,
\end{equation*}
where
\begin{equation*}
 h_{i j}^\eps(n) = \mathsf{P}_i \{ \xi^\eps(n) = j, \ \mu_0^\eps \wedge \mu_i^\eps > n \}.
\end{equation*}
It follows that $\mu_0^\eps$, the first hitting time of state $0$, is a regenerative stopping time for $\xi^\eps(n)$.

For the model of perturbed semi-Markov processes, the characteristic equation takes the form
\begin{equation} \label{eq:chareq}
 \phi_{i i}^\eps(\rho) = 1.
\end{equation}
It will be shown that Equation \eqref{eq:chareq} has a unique solution $\rho^\eps$ for sufficiently small $\varepsilon$ that does not depend on $i$.

Furthermore, let us define
\begin{equation*}
 \widetilde{\pi}_{i j}^\zero = \frac{\sum_{n=0}^\infty e^{\rho^\zero n} h_{i j}^\zero(n)}{\sum_{n=0}^\infty n e^{\rho^\zero n} g_{i i}^\zero(n)}, \ i,j \neq 0.
\end{equation*}

It is interesting to note that in the pseudo-stationary case, $\widetilde{\pi}_{i j}^\zero$ does not depend on $i$. Indeed, in this case $\rho^\zero = 0$ and $\mu_0^\zero = \infty$ almost surely, so we get
\begin{equation*}
 \widetilde{\pi}_{i j}^\zero = \frac{\sum_{n=0}^\infty h_{i j}^\zero(n)}{\sum_{n=0}^\infty n g_{i i}^\zero(n)} = \frac{\mathsf{E}_i \sum_{n=0}^\infty \chi( \xi^\zero(n) = j, \ \mu_i^\zero > n )}{\mathsf{E}_i \mu_i^\zero}, \ i,j \neq 0.
\end{equation*}
That is, $\widetilde{\pi}_{i j}^\zero$ is the quotient of the expected number of visits to state $j$ during an excursion starting from state $i$ and the expected length of this excursion for the limiting semi-Markov process. It is known that this quantity does not depend on state $i$. Moreover, in this case $\pi_j^\zero = \widetilde{\pi}_{i j}^\zero$, $j=1,\ldots,N$, are the stationary probabilities for the limiting semi-Markov process.

Let us formulate condition $\Cperturbationmom$ for $\rho = \rho^\zero$:
\begin{enumerate}
 \item[$\Cperturbation$:] $p_{i j}^\eps(\rho^\zero,r) = p_{i j}^\zero(\rho^\zero,r) + p_{i j}[\rho^\zero,r,1] \varepsilon + \cdots + p_{i j}[\rho^\zero,r,k-r] \varepsilon^{k-r} + o(\varepsilon^{k-r})$, $r=0,\ldots,k$, $i,j \neq 0$, where $|p_{i j}[\rho^\zero,r,n]| < \infty$, $r=0,\ldots,k$, $n=1,\ldots,k-r$, $i,j \neq 0$.
\end{enumerate}

Under conditions $\Ccontinuity$--$\Cperturbation$ it follows from Theorem \ref{thm:mixedmoments} that we for each $i \neq 0$ and $r=0,\ldots,k$ have the asymptotic expansion
\begin{equation*}
 \phi_{i i}^\eps(\rho^\zero,r) = b_i[r,0] + b_i[r,1] \varepsilon + \cdots + b_i[r,k-r] \varepsilon^{k-r} + o(\varepsilon^{k-r}),
\end{equation*}
where $b_i[r,0] = \phi_{i i}^\zero(\rho^\zero,r)$, $r=0,\ldots,k$, $i \neq 0$, and the coefficients $b_i[r,n]$, $r=0,\ldots,k$, $n=1,\ldots,k-r$, $i \neq 0$, can be calculated from the recursive formulas given in this theorem.

We now present the main result of this paper.

\begin{theorem} \label{thm:expexp}
Assume that conditions $\Ccontinuity$--$\Cnonperiodicity$ hold. Then:
\begin{enumerate}
\item[$\mathbf{(i)}$] For $\varepsilon$ sufficiently small, there exists a unique root $\rho^\eps$ of the characteristic equation \eqref{eq:chareq} which does not depend on the choice of initial state $i$. Moreover, we have the asymptotic expansion
\begin{equation*}
 \rho^\eps = \rho^\zero + c_1 \varepsilon + \cdots + c_k \varepsilon^k + o(\varepsilon^k),
\end{equation*}
where $c_1 = - b_i[0,1] / b_i[1,0]$ and for $n = 2,\ldots,k$,
\begin{equation*}
\begin{split}
 c_n = - \frac{1}{b_i[1,0]} &\Bigg( b_i[0,n] + \sum_{q=1}^{n-1} b_i[1,n-q] c_q \\
 &+ \sum_{m=2}^n \sum_{q=m}^n b_i[m,n-q] \cdot \sum_{n_1,\ldots,n_{q-1} \in D_{m,q}} \prod_{p=1}^{q-1} \frac{c_p^{n_p}}{n_p!} \Bigg),
\end{split}
\end{equation*}
with $D_{m,q}$ being the set of all non-negative integer solutions to the system
\begin{equation*}
 n_1 + \cdots + n_{q-1} = m, \quad n_1 + \cdots + (q-1) n_{q-1} = q.
\end{equation*}
\item[$\mathbf{(ii)}$] For any non-negative integer valued function $n^\eps \rightarrow \infty$ as $\varepsilon \rightarrow 0$ in such a way that $\varepsilon^r n^\eps \rightarrow \lambda_r \in [0,\infty)$ for some $1 \leq r \leq k$, we have
\begin{equation*}
 \frac{ \mathsf{P}_i \{ \xi^\eps(n^\eps) = j, \ \mu_0^\eps > n^\eps \} }{\exp( -(\rho^\zero + c_1 \varepsilon + \cdots + c_{r-1} \varepsilon^{r-1}) n^\eps )} \rightarrow \frac{\widetilde{\pi}_{i j}^\zero}{e^{\lambda_r c_r}} \ \text{as} \ \varepsilon \rightarrow 0, \ i,j \neq 0.
\end{equation*}
\end{enumerate}
\end{theorem}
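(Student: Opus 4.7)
The plan is to derive Theorem \ref{thm:expexp} by reducing it to Theorem \ref{thm:expexpreg}. For any fixed initial state $i \neq 0$, the semi-Markov process $\xi^\eps(n)$ is regenerative with proper regenerations at successive returns to $i$ (after the modification of transitions out of $0$ discussed at the beginning of this section), with first-regeneration distribution $f^\eps(n) = g_{ii}^\eps(n)$, regenerative stopping time $\mu_0^\eps$, and $q^\eps(n, \{j\}) = h_{ij}^\eps(n)$. The characteristic equation \eqref{eq:chareqreg} then coincides with \eqref{eq:chareq}. The task reduces to verifying conditions $\Ccontinuityreg$, $\Cmgfreg$, $\Cperturbationreg$, and $\Ctailprobreg$ for this choice of regenerative structure.

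I would verify $\Ccontinuityreg$ as follows: pointwise convergence $g_{ii}^\eps(n) \to g_{ii}^\zero(n)$ is a finite-path computation from condition $\Ccontinuity$; non-periodicity of $g_{ii}^\zero(n)$ is delivered by condition $\Cnonperiodicity$ combined with Lemma \ref{lmm:solperiod} (whose hypothesis holds under $\Cergodicity$); non-concentration at zero is immediate; and part (b) follows for $f^\eps = 1 - \sum_n g_{ii}^\eps(n)$, with $\Cergodicity$ giving $f^\zero < 1$. Condition $\Cmgfreg$ is exactly the content of part (iv) of Lemma \ref{lmm:mgfproperties}, which supplies a $\delta > 0$ with $\phi_{ii}^\eps(\delta)$ uniformly bounded and $\phi_{ii}^\zero(\delta) > 1$; together with Lemma \ref{lmm:chareqreg} this yields $\rho^\eps \to \rho^\zero$, the unique non-negative solution of $\phi_{ii}^\zero(\rho) = 1$. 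Condition $\Cperturbationreg$ is the $i$-th coordinate of the expansion of $\mathsf{\Phi}_i^\eps(\rho^\zero, r)$ produced by Theorem \ref{thm:mixedmoments}, with the coefficients $a_{n,r}$ identified with $b_i[r, n]$.

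For $\Ctailprobreg$ I would use the Chebyshev-type bound
\begin{equation*}
 \sum_{n=0}^\infty e^{(\rho^\zero + \gamma)n} q^\eps(n, X) \leq \frac{\mathsf{E}_i \exp\bigl((\rho^\zero + \gamma)(\mu_0^\eps \wedge \mu_i^\eps)\bigr)}{1 - e^{-(\rho^\zero + \gamma)}},
\end{equation*}
with $\gamma = \delta - \rho^\zero > 0$; the expectation in the numerator equals $\phi_{ii}^\eps(\delta) + \widetilde{\phi}_{ii}^\eps(\delta)$, which is uniformly bounded for small $\varepsilon$ by Lemmas \ref{lmm:finitemgfs}, \ref{lmm:finitemgfs2}, and \ref{lmm:mgfproperties}(iv). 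With all four conditions in place, Theorem \ref{thm:expexpreg}(i) delivers the expansion of $\rho^\eps$ with the stated coefficients. Independence of $\rho^\eps$ from $i$ follows from the solidarity identity \eqref{eq:solidarityid}: substituting $\rho = \rho^\eps$ zeroes the left-hand side, and the factor $1 - \jphi_{ii}^\eps(\rho^\eps)$ on the right is non-zero (by the same regenerative decomposition used in the proof of Lemma \ref{lmm:mgfproperties}(i)), so $\phi_{jj}^\eps(\rho^\eps) = 1$ for every $j \neq 0$; uniqueness from Lemma \ref{lmm:chareqreg} then forces all the $i$-dependent roots to coincide.

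Part (ii) follows by observing that $\{j\} \in \Gamma_0$ because finite-path continuity under $\Ccontinuity$ gives $h_{ij}^\eps(n) \to h_{ij}^\zero(n)$ for each $n$, and then applying Theorem \ref{thm:expexpreg}(ii) with $A = \{j\}$; the limiting constant $\widetilde{\pi}^\zero(\{j\})$ agrees by definition with $\widetilde{\pi}_{ij}^\zero$. The step I expect to require the most attention is the verification of $\Ctailprobreg$, since it demands uniform control (in $\varepsilon$) of exponential moments at a level $\delta > \rho^\zero$ of both $\mu_i^\eps$ (giving $\phi_{ii}^\eps(\delta)$) and $\mu_0^\eps$ on the event $\{\mu_0^\eps < \mu_i^\eps\}$ (giving $\widetilde{\phi}_{ii}^\eps(\delta)$); this is precisely the content of Lemmas \ref{lmm:finitemgfs2} and \ref{lmm:mgfproperties}(iv) and is not immediate from the semi-Markov hypotheses alone. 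Everything else is an orchestration of the solidarity lemmas together with Theorem \ref{thm:mixedmoments}.
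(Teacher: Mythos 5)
Your proposal follows essentially the same route as the paper's proof: reduce to Theorem \ref{thm:expexpreg} with $f^\eps(n)=g_{ii}^\eps(n)$ and $q^\eps(n,\{j\})=h_{ij}^\eps(n)$, verify $\Ccontinuityreg$--$\Ctailprobreg$ via Lemmas \ref{lmm:finitemgfs}, \ref{lmm:finitemgfs2}, \ref{lmm:mgfproperties}, \ref{lmm:solperiod} and Theorem \ref{thm:mixedmoments} (the tail bound for $\Ctailprobreg$ being a harmless variant of the paper's exact identity \eqref{eq:expexp5}), and obtain independence of the root from $i$ through the solidarity identity \eqref{eq:solidarityid}. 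The one point you state more loosely than the paper is $\Ccontinuityreg$\textbf{(b)}: convergence of the total mass $\sum_n g_{ii}^\eps(n)$ does not follow from pointwise convergence of $g_{ii}^\eps(n)$ alone, and the paper derives it from $\mathsf{\Phi}_i^\eps(\rho)=(\mathbf{I}-\iP^\eps(\rho))^{-1}\mathbf{p}_i^\eps(\rho)$ at $\rho=0$ (equivalently, from the uniform bound on $\phi_{ii}^\eps(\delta)$ in Lemma \ref{lmm:mgfproperties}\textbf{(iv)} together with dominated convergence).
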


\begin{proof}
Throughout the proof, we let the initial state $i \neq 0$ be fixed. It will be shown that conditions $\Ccontinuity$--$\Cnonperiodicity$ imply that conditions $\Ccontinuityreg$--$\Ctailprobreg$ hold for the functions
\begin{equation*}
 f^\eps(n) = g_{i i}^\eps(n), \ n=0,1,\ldots,
\end{equation*}
and
\begin{equation*}
 q^\eps(n,A) = \sum_{j \in A} h_{i j}^\eps(n), \ n=0,1,\ldots, \ A \subseteq X.
\end{equation*}
Then, Theorem \ref{thm:expexpreg} can be applied in order to prove Theorem \ref{thm:expexp}.

Let us first show that the function
\begin{equation*}
 f(n) = g_{i i}^\eps(n) = \mathsf{P}_i \{ \mu_i^\eps = n, \ \nu_0^\eps > \nu_i^\eps \}, \ n=0,1,\ldots
\end{equation*}
satisfies condition $\Ccontinuityreg$.

As was shown in Section \ref{sec:moments}, the vector of moment generating functions $\mathsf{\Phi}_i^\eps(\rho)$ satisfies the following system of linear equations:
\begin{equation} \label{eq:expexp1}
 \mathsf{\Phi}_i^\eps(\rho) = \mathbf{p}_i^\eps(\rho) + \iP^\eps(\rho) \mathsf{\Phi}_i^\eps(\rho).
\end{equation}
It follows from part $\mathbf{(iv)}$ of Lemma \ref{lmm:mgfproperties} that there exist $\varepsilon_1 > 0$ and $\delta > 0$ such that $\mathsf{\Phi}_i^\eps(\rho) < \infty$ for all $\varepsilon \leq \varepsilon_1$ and $\rho \leq \delta$. Thus, we can use Lemma \ref{lmm:finitemgfs} to conclude that the system \eqref{eq:expexp1} has a unique solution for $\varepsilon \leq \varepsilon_1$ and $\rho \leq \delta$ given by
\begin{equation} \label{eq:expexp2}
 \mathsf{\Phi}_i^\eps(\rho) = ( \mathbf{I} - \iP^\eps(\rho) )^{-1} \mathbf{p}_i^\eps(\rho).
\end{equation}
Using \eqref{eq:expexp2} and condition $\Ccontinuity$ it follows that $\mathsf{\Phi}_i^\eps(\rho) \rightarrow \mathsf{\Phi}_i^\zero(\rho)$ as $\varepsilon \rightarrow 0$ for $\rho \leq \delta$ and in particular
\begin{equation} \label{eq:expexp3}
 \phi_{i i}^\eps(\rho) \rightarrow \phi_{i i}^\zero(\rho) \ \text{as} \ \varepsilon \rightarrow 0, \ \rho \leq \delta.
\end{equation}

Relation \eqref{eq:expexp3} implies that for all $n=0,1,\ldots,$ we have $g_{i i}^\eps(n) \rightarrow g_{i i}^\zero(n)$ as $\varepsilon \rightarrow 0$. Since $\phi_{i i}^\eps(0) = g_{i i}^\eps$, relation \eqref{eq:expexp3} also implies that $g_{i i}^\eps \rightarrow g_{i i}^\zero$ as $\varepsilon \rightarrow 0$. Furthermore, by condition $\Cergodicity$, the function $g_{i i}^\zero(n)$ is not concentrated at zero and by applying Lemma \ref{lmm:solperiod} under condition $\Cnonperiodicity$, we see that $g_{i i}^\zero(n)$ is non-periodic. Thus, the function $g_{i i}^\eps(n)$ satisfies condition $\Ccontinuityreg$.

It follows from Lemma \ref{lmm:mgfproperties} that the moment generating function
\begin{equation*}
 \phi^\eps(\rho) = \phi_{i i}^\eps(\rho) = \sum_{n=0}^\infty e^{\rho n} g_{i i}^\eps(n), \ \rho \in \mathbb{R},
\end{equation*}
satisfies condition $\Cmgfreg$.

Applying Lemma \ref{lmm:chareqreg} now shows that there exists a unique non-negative solution $\rho_i^\eps$ of the characteristic equation $\phi_{i i}^\eps(\rho) = 1$ for sufficiently small $\varepsilon$, say $\varepsilon \leq \varepsilon_2$. Now for any $j \neq i$ and $\varepsilon \leq \varepsilon_2$ we can apply the same arguments as in the proof of part $\mathbf{(i)}$ of Lemma \ref{lmm:mgfproperties} to see that we also have $\phi_{j j}^\eps(\rho_i^\eps) = 1$. Thus, it can be concluded that the root of the characteristic equation \eqref{eq:chareq} does not depend on the initial state $i$ and we can drop the index and just write $\rho^\eps$.

It follows from Theorem \ref{thm:mixedmoments} that condition $\Cperturbationreg$ holds for the moments
\begin{equation*}
 \phi^\eps(\rho^\zero,r) = \phi_{i i}^\eps(\rho^\zero,r), \ r=0,\ldots,k.
\end{equation*}

Part $\mathbf{(i)}$ now follows by applying part $\mathbf{(i)}$ of Theorem \ref{thm:expexpreg}.

To prove part $\mathbf{(ii)}$ we also need to show that the function
\begin{equation*}
 q^\eps(n,X) = \sum_{j \in X} h_{i j}^\eps(n) = \mathsf{P}_i \{ \mu_0^\eps \wedge \mu_i^\eps > n \}, \ n=0,1,\ldots,
\end{equation*}
satisfies condition $\Ctailprobreg$. Thus, we need to show that there exists $\gamma > 0$ such that
\begin{equation} \label{eq:expexp4}
 \limsup_{0 \leq \varepsilon \rightarrow 0} \sum_{n=0}^\infty e^{(\rho^\zero + \gamma) n} \mathsf{P}_i \{ \mu_0^\eps \wedge \mu_i^\eps > n \} < \infty.
\end{equation}

In order to do this, first note that for any $\rho \neq 0$ we have
\begin{equation} \label{eq:expexp5}
\begin{split}
 \sum_{n=0}^\infty e^{\rho n} \mathsf{P}_i \{ \mu_0^\eps \wedge \mu_i^\eps > n \} &= \sum_{n=0}^\infty \sum_{k=n+1}^\infty e^{\rho n} \mathsf{P}_i \{ \mu_0^\eps \wedge \mu_i^\eps = k \} \\
 &= \sum_{k=1}^\infty \frac{e^{\rho k} - 1}{e^\rho - 1} \mathsf{P}_i \{ \mu_0^\eps \wedge \mu_i^\eps = k \} \\
 &= \frac{\mathsf{E}_i e^{\rho (\mu_0^\eps \wedge \mu_i^\eps)} - 1}{e^\rho - 1}.
\end{split}
\end{equation}

By Lemma \ref{lmm:mgfproperties} there exist $\delta \in (0,\beta]$ and $\varepsilon_3 > 0$ such that $\mathsf{\Phi}_i^\eps(\delta) < \infty$, for all $\varepsilon \leq \varepsilon_3$. From this, Lemma \ref{lmm:finitemgfs} implies that for any $\varepsilon \leq \varepsilon_3$, we have $\iP^\eps(\delta) < \infty$ and the inverse matrix $( \mathbf{I} - \iP^\eps(\delta) )^{-1}$ exists. Moreover, since $\delta \leq \beta$, condition $\Cmgf$ gives that there exists $\varepsilon_4 > 0$ such that $\mathbf{p}_0^\eps(\delta) < \infty$ for $\varepsilon \leq \varepsilon_4$. By Lemma \ref{lmm:finitemgfs2}, it can now be concluded that $\widetilde{\mathsf{\Phi}}_i^\eps(\delta) < \infty$ for $\varepsilon \leq \min \{ \varepsilon_3, \varepsilon_4 \}$. Using this we get
\begin{equation} \label{eq:expexp6}
 \mathsf{E}_i e^{\delta (\mu_0^\eps \wedge \mu_i^\eps)} = \phi_{i i}^\eps(\delta) + \widetilde{\phi}_{i i}^\eps(\delta) < \infty, \ \varepsilon \leq \min \{ \varepsilon_3, \varepsilon_4 \}.
\end{equation}

It follows from Lemma \ref{lmm:chareqreg} that $\rho^\zero < \delta$, so there exists $\gamma > 0$ such that
\begin{equation} \label{eq:expexp7}
 \rho^\zero + \gamma < \delta.
\end{equation}

Relation \eqref{eq:expexp4} now follows from \eqref{eq:expexp5}, \eqref{eq:expexp6}, and \eqref{eq:expexp7}.

Applying part $\mathbf{(ii)}$ of Theorem \ref{thm:expexpreg} now shows that part $\mathbf{(ii)}$ of Theorem \ref{thm:expexp} holds for all $j \neq 0$ for which we have
\begin{equation} \label{eq:expexp8}
 h_{i j}^\eps(n) \rightarrow h_{i j}^\zero(n) \ \text{as} \ \varepsilon \rightarrow 0, \ n=0,1,\ldots
\end{equation}
However, under condition $\Ccontinuity$, relation \eqref{eq:expexp8} holds for all $j \neq 0$ since it is possible to write $h_{i j}^\eps(n)$ as a finite sum where each term in the sum is a continuous function of quantities given in condition $\Ccontinuity$. This concludes the proof of Theorem \ref{thm:expexp}.
\end{proof}


\end{document}